\newtheorem{thm}{Theorem}[section]
\newtheorem{lemma}[thm]{Lemma}
\theoremstyle{definition}
\newtheorem{remark}[thm]{Remark}
\def\XXint#1#2#3{{\setbox0=\hbox{$#1{#2#3}{\int}$}
         \vcenter{\hbox{$#2#3$}}\kern-.5\wd0}}
\def\R{\mathbb{R}}
\def\e{\varepsilon}
\def\wN{\widetilde{N}}
\numberwithin{equation}{section}
\begin{document}

\title{Nodal Sets and Doubling Conditions\\ in Elliptic Homogenization}

\author{Fanghua Lin \thanks{Supported in part by NSF grant DMS-1501000.}
\qquad
Zhongwei Shen\thanks{Supported in part by NSF grant DMS-1600520.}}
\date{}
\maketitle

\centerline{Dedicated to Our Teacher Professor Carlos E. Kenig}
\centerline{on the Occasion of His 65th Birthday}

\begin{abstract}

This paper is concerned with uniform measure estimates for nodal sets of solutions
in elliptic homogenization. 
We consider a family of second-order elliptic operators $\{ \mathcal{L}_\e\}$
in divergence form with rapidly oscillating and periodic coefficients.
We show that the $(d-1)$-dimensional Hausdorff measures of
the nodal sets of solutions to $\mathcal{L}_\e (u_\e)=0$ in a ball in $\R^d$
are bounded  uniformly  in $\e>0$.
The proof relies on a uniform doubling condition and approximation of $u_\e$
by solutions of the homogenized equation.

\end{abstract}

\section{\bf Introduction}\label{section-1}

In this paper we initiate the study of uniform measure estimates for nodal sets of solutions
in elliptic homogenization.
We consider a family of elliptic operators in divergence form,
\begin{equation}\label{operator}
\mathcal{L}_\e =-\text{\rm div} \big(A(x/\e)\nabla\big),
\end{equation}
where $\e>0$ and $A(y)=(a_{ij} (y))$ is a symmetric $d\times d$ matrix-valued function
in $\R^d$.
Throughout this paper, unless otherwise stated, we will impose the following conditions,
\begin{itemize}

\item (ellipticity) there exists some $\lambda\in (0, 1]$ such that
\begin{equation}\label{ellipticity}
\lambda |\xi|^2 \le \langle A(y)\xi, \xi\rangle  \le |\xi|^2 \quad \text{ for any } y, \xi \in \R^d;
\end{equation}
 
 \item 
 
 (periodicity) $A$ is 1-periodic,
 \begin{equation}\label{periodicity}
 A(y+z)=A (y) \quad \text{ for any } y\in \R^d \text{ and } z\in \mathbb{Z}^d.
 \end{equation}
 
 \end{itemize}
Our main results, as in the case $\e=1$,
 also require the following Lipschitz condition:
 there exists some $M>0$ such that
 \begin{equation}\label{smoothness}
 |A(x)-A(y)|\le M |x-y| \quad \text{ for any } x, y\in \R^d.
 \end{equation}

 Let $B(x, r)=\{ y\in \R^d: |y-x|<r \}$ and $B_r =B(0,r)$.
 
 \begin{thm}\label{main-theorem-1}
 Assume that $A=A(y)$ satisfies conditions (\ref{ellipticity}), (\ref{periodicity}) and
 (\ref{smoothness}).
 Let $u_\e\in H^1(B_2)$ be a nonzero  weak solution of $\mathcal{L}_\e (u_\e)=0$ in $B_2$.
 Suppose that
 \begin{equation}\label{double-1}
 \fint_{B_2} |u_\e|^2\, dx \le N \fint_{B_{\sqrt{\lambda}}} |u_\e|^2\, dx
 \end{equation}
 for some $N>1$.
 Then
 \begin{equation}\label{main-estimate-1}
\mathcal{H}^{d-1}
\big\{ x\in B_{\sqrt{\lambda}/4}: \ u_\e (x)=0 \big\}
\le C(N),
\end{equation}
where $C(N)$ depends only on $d$, $\lambda$, $M$ and $N$.
 \end{thm}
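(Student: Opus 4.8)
The plan is to derive (\ref{main-estimate-1}) from a \emph{uniform} (in $\e$) doubling inequality for $u_\e$, and then to feed that into the now-standard machinery that converts bounded doubling indices into $(d-1)$-dimensional measure bounds for nodal sets. The one genuinely new point is that this machinery must be run in two regimes. At scales $r\gtrsim\e$ one replaces $u_\e$ by a solution $u_0$ of the homogenized equation and exploits that, since $A$ is periodic, the homogenized operator $\mathcal{L}_0=-\mathrm{div}(\widehat A\,\nabla)$ has \emph{constant} coefficients, so $u_0$ is harmonic after a linear change of variables (which accounts for the powers of $\sqrt\lambda$), and the entire classical frequency/nodal theory applies to $u_0$. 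At scales $r\lesssim\e$ one rescales $v(y)=u_\e(x+\e y)$, which solves $-\mathrm{div}\big(A(y+x/\e)\nabla v\big)=0$, an elliptic equation with the \emph{same} ellipticity constant $\lambda$ and Lipschitz constant $M$, and uses the classical theory for Lipschitz coefficients. As a preliminary reduction, for $\e$ bounded away from $0$ the operator $\mathcal{L}_\e$ on $B_2$ already has Lipschitz coefficients of controlled norm and (\ref{main-estimate-1}) is the classical estimate (Hardt--Simon; Han--Lin; Lin); so we may assume $0<\e<\e_0(d,\lambda,M)$, and after rescaling that $\fint_{B_{\sqrt\lambda}}|u_\e|^2=1$, so that (\ref{double-1}) reads $\fint_{B_2}|u_\e|^2\le N$.

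Step 2, which is the heart of the argument, is the uniform doubling inequality: there are $\Lambda=\Lambda(N,d,\lambda,M)$ and $c_1=c_1(d,\lambda,M)>0$ such that
\[
 \fint_{B(x,2r)}|u_\e|^2\le\Lambda\,\fint_{B(x,r)}|u_\e|^2\qquad\text{for all }x\in B_{\sqrt\lambda/2}\text{ and all }r\in[\e,c_1].
\]
The normalization of (\ref{double-1}) gives a bound on the doubling index of $u_\e$ at scale $\sim\sqrt\lambda$ about the origin; a three-ball / propagation-of-smallness argument, uniform in $\e$ thanks to the Avellaneda--Lin interior estimates, turns this into a bound on the doubling index at scale $\sim c_1$ about every $x\in B_{\sqrt\lambda/2}$. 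One then has to propagate this bound down to the scale $\e$ without its deteriorating across the $\sim\log(1/\e)$ intervening dyadic scales. At a scale $r\gg\e$ one approximates $u_\e$ on $B(x,r)$ by the $\mathcal{L}_0$-solution $u_0$ with the same boundary values, with $\|u_\e-u_0\|_{L^2(B(x,r))}\le C(\e/r)\,\|u_\e\|_{L^2(B(x,2r))}$; since $u_0$ is harmonic up to a linear change of variables, its Almgren frequency is monotone, so its doubling index cannot jump upward as the scale decreases, and this passes back to $u_\e$ up to an $O(\e/r)$ error. At scales $r\lesssim\e$ one rescales as above and invokes the Garofalo--Lin almost-monotone frequency for Lipschitz coefficients. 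The cleanest way to splice the two regimes and to rule out any $\e$-dependent accumulation is a compactness argument: if the doubling index were unbounded along a sequence $(\e_j,x_j,r_j)$, rescaling by $r_j$ (when $r_j/\e_j\to\infty$, passing to the limit via $H^1$-compactness in homogenization) produces a nonzero harmonic function with infinite frequency at unit scale, while rescaling by $\e_j$ (when $r_j/\e_j$ stays bounded) produces a solution of a fixed Lipschitz-coefficient equation with infinite doubling index at bounded scale, each of which contradicts strong unique continuation together with the qualitative downward propagation forced by the normalization. (Alternatively, Step 2 can be carried out through an almost-monotone frequency function built directly for $\mathcal{L}_\e$, valid for $r\gtrsim\e$.)

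Step 3 extracts (\ref{main-estimate-1}) from Step 2. Fix $x\in B_{\sqrt\lambda/4}$ with $u_\e(x)=0$. At the scale $r=\e$, rescaling turns $u_\e$ into a solution of a fixed Lipschitz-coefficient operator with doubling index $\le\Lambda$ on $B_2$, so the classical estimate gives $\mathcal{H}^{d-1}\big(\{u_\e=0\}\cap B(x,\e)\big)\le C(\Lambda)\,\e^{d-1}$. At scales $\e\le r\le c_1$, Step 2 shows the doubling index of $u_\e$ on $B(x,r)$ is, up to a bounded factor, that of the local homogenized approximant $u_0$ when $\e/r$ is small, or of a rescaled fixed-coefficient solution when $\e/r\sim1$; in either case the monotonicity together with the ``simplex'' and ``hyperplane'' inequalities on doubling indices that drive the volume estimates of Donnelly--Fefferman, Lin and Logunov hold with $\e$-independent constants at all scales in $(0,c_1]$. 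Running that combinatorial recursion from scale $\sqrt\lambda/4$ down to scale $\e$, and closing it with the base estimate just obtained at scale $\e$, yields $\mathcal{H}^{d-1}\big(\{u_\e=0\}\cap B_{\sqrt\lambda/4}\big)\le C(\Lambda)\le C(N)$. That this bound is independent of $\e$ is exactly the feature of the combinatorial argument: the recursion over scales is organized so that it does not accumulate (the codimension-one gain at each subdivision), so the output depends only on $\Lambda$, which is itself $\e$-independent.

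The principal obstacle is Step 2: $\mathcal{L}_\e$ possesses no exact monotone frequency function, and the homogenization approximation is effective only at scales $r\gg\e$, so one must keep the doubling index under uniform control across the logarithmically many scales between $1$ and $\e$ and match it cleanly with the fixed-coefficient theory around the scale $\e$. A secondary difficulty is verifying that the doubling-index combinatorics of Step 3 genuinely transfer from $u_0$ to $u_\e$ with $\e$-independent constants and glue across the transition scale; by contrast, the qualitative $(d-1)$-rectifiability and local finiteness of $\{u_\e=0\}$ that make (\ref{main-estimate-1}) meaningful are standard consequences of the interior $C^{1,\alpha}$ regularity of $u_\e$.
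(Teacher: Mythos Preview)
Your Step~2 (the uniform doubling estimate) is essentially the paper's argument: the paper proves Theorem~\ref{main-theorem-2} by exactly the compactness/homogenization mechanism you describe (Lemma~\ref{d-lemma} handles one dyadic step for $\e\le\e_0$ via $H^1$-compactness and the three-spheres inequality for the harmonic limit; induction gets down to scale $\sim\e/\e_0$; a blow-up by the factor $\e/\e_0$ reduces the sub-$\e$ scales to the classical Garofalo--Lin result). So that part is correct and on the same track.

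Your Step~3, however, takes a genuinely different route from the paper, and the difference matters. You propose to run the Donnelly--Fefferman/Lin/Logunov doubling-index combinatorics (``simplex'' and ``hyperplane'' lemmas) directly on $u_\e$, asserting that these hold with $\e$-independent constants at all scales in $(0,c_1]$ by transferring them from the homogenized approximant when $r\gg\e$ and from the rescaled Lipschitz equation when $r\sim\e$. That assertion is not established anywhere and is not obvious: those combinatorial lemmas concern the \emph{distribution} of doubling indices over simplices and hyperplane slabs at \emph{several} nearby scales simultaneously, and an $O(\e/r)$ approximation error at each scale does not automatically preserve them. As written, this is a gap.

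The paper sidesteps this issue entirely. It never tries to prove combinatorial doubling-index lemmas for $u_\e$. Instead it uses the $L^\infty$ approximation of Theorem~\ref{main-theorem-3}, $\|u_\e-u_0\|_{L^\infty(B_{3/4})}\le C\e$, to observe that $Z(u_\e)\cap B_{r_0}\subset\{|u_0|\le C\e\}$. Since $u_0$ solves a \emph{constant}-coefficient equation, its singular set $S(u_0)=\{u_0=|\nabla u_0|=0\}$ has controlled $\mathcal H^{d-2}$ measure. A compactness lemma (Lemma~\ref{c-lemma}) then covers $\{|u_0|<\delta_1\}$ by two families of balls: a ``good'' family with $\sum t_i^{d-1}\le C(N)$ on which $|\nabla u_0|\ge\delta_1/2$, and a ``bad'' family near $S(u_0)$ with $\sum s_j^{d-1}<\tfrac14 r_0^{d-1}$. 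On the good balls the Implicit Function Theorem makes $\{|u_0|\le C\e\}$ an $O(\e)$-tube, which one covers by $O(t_i^{d-1}\e^{1-d})$ balls of radius $C\e$ and handles with your small-scale estimate (Lemma~\ref{s-lemma}). On the bad balls one simply recurses. This yields $F_\e(0,r_0)\le C(\wN)+\tfrac14\max_j F_\e(y_j,s_j)$, and iterating with the factor $\tfrac14$ (stopping whenever the scale drops below $C\e$, where Lemma~\ref{s-lemma} closes things) gives the bound. The point is that the only quantitative nodal/singular input is for $u_0$, where it is classical, and for $u_\e$ at scale $\e$, where it is also classical after rescaling; no $\e$-uniform frequency combinatorics for $u_\e$ is needed.
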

 
 The study of nodal sets for solutions and eigenfunctions 
 is important  for understanding geometric properties of elliptic operators.
 Classical results in this area may be found
 in \cite{D-Fefferman, Hardt-Simon, Lin-N, Han-Lin-1, Han-Lin-2, Han-1994}.
 See \cite{Logunov-2018-1, Logunov-2018-2} and their references for more recent advances.
 In particular, for $\e=1$, Theorem \ref{main-theorem-1}
 was proved  in \cite{Han-Lin-2} by Q. Han and F. Lin.
Since the constants $C$ depend on the smoothness of coefficients,
quantitative results in \cite{Han-Lin-2} as well as in other previous work 
do not extend directly to the operator $\mathcal{L}_\e$ for estimates that are uniform with respect to $\e$.
Our Theorem \ref{main-theorem-1} provides the first result on the uniform measure estimates
of nodal sets of solutions for $\mathcal{L}_\e$ in the periodic setting.

Our general approach to the estimate (\ref{main-estimate-1})
follows the iterating-rescaling scheme used in \cite{Han-Lin-2} 
(also see related earlier work in \cite{Hardt-Simon, Lin-N}).
As in \cite{Han-Lin-2}, the proof relies on the doubling condition for solutions.
 
 \begin{thm}\label{main-theorem-2}
 Assume that $A=A(y)$ satisfies conditions (\ref{ellipticity}), (\ref{periodicity}) and
 (\ref{smoothness}).
 Let $u_\e\in H^1(B_2)$ be a weak solution of $\mathcal{L}_\e (u_\e)=0$ in $B_2$.
 Suppose $u_\e$ satisfies (\ref{double-1}) for some $N>1$.
 Then
 \begin{equation}\label{main-estimate-2}
 \fint_{B_r} |u_\e|^2\, dx
 \le C (N)\fint_{B_{r/2}} |u_\e|^2\, dx
 \end{equation}
 for any $0<r<1$,
 where $C(N)$ depends only on $d$, $\lambda$, $M$ and $N$.
 \end{thm}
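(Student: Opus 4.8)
As announced in the introduction, the proof rests on approximation of $u_\e$ by solutions of the homogenized equation. Concretely, the plan is to transport, through quantitative homogenization, the \emph{exact} monotonicity of the frequency function that is available for the constant-coefficient homogenized operator $\mathcal{L}_0=-\operatorname{div}(\widehat{A}\,\nabla)$, and then to handle the scales below $\e$ by rescaling to an equation whose coefficients no longer oscillate. Write $D_\e(r):=\fint_{B_{2r}}|u_\e|^2\,dx\big/\fint_{B_r}|u_\e|^2\,dx$; since (\ref{main-estimate-2}) amounts to the bound $D_\e(r)\le C(N)$ for $0<r<1/2$ (the range $r$ near $1$ being immediate from (\ref{double-1}) and a volume comparison), this is what has to be shown. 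I would first dispose of the range $\e\ge\e_0$, for a fixed $\e_0=\e_0(d,\lambda,M)$ adapted to the estimates below: there $A(x/\e)$ is Lipschitz on $B_2$ with constant at most $M/\e_0$, so the classical frequency-function argument for scalar second-order elliptic equations with Lipschitz coefficients applies directly, and (\ref{double-1}) turns into a bound on the frequency at unit scale, giving (\ref{main-estimate-2}) with a constant depending only on $d,\lambda,M,N$. So assume from now on $\e<\e_0$.

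For the scales $C(N)\e\le r<1$ the goal is an \emph{almost-monotonicity} of $D_\e$. Fix such an $r$ and let $v_0$ solve $\mathcal{L}_0(v_0)=0$ in $B_{2r}$ with $v_0=u_\e$ on $\partial B_{2r}$. Using the periodic correctors together with the interior $H^2$-estimate for $v_0$, the Lipschitz hypothesis (\ref{smoothness}) yields $\|u_\e-v_0\|_{L^2(B_{3r/2})}\le C(\e/r)^{1/2}\|u_\e\|_{L^2(B_{2r})}$, and hence
\begin{equation*}
\Big|\,\fint_{B_\rho}|u_\e|^2\,dx-\fint_{B_\rho}|v_0|^2\,dx\,\Big|\le C\Big(\frac{\e}{r}\Big)^{1/2}\fint_{B_{2r}}|u_\e|^2\,dx
\qquad\text{for }\ \tfrac r2\le\rho\le\tfrac{3r}{2}.
\end{equation*}
Because $\mathcal{L}_0$ has constant coefficients, the frequency of $v_0$ is \emph{exactly} monotone, so the doubling quotient of $v_0$ (over spheres, and therefore up to a bounded factor over balls) is non-decreasing in the scale with no multiplicative loss; feeding the displayed error back in, I expect to reach an inequality which, schematically, asserts that $D_\e$ is non-decreasing in $r$ up to a multiplicative error of order $(\e/r)^{1/2}\big(1+D_\e(r)+D_\e(r/2)\big)^{2}$. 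Since $D_\e(\sqrt\lambda)\le\lambda^{-d/2}N$ by (\ref{double-1}) and $\sum_{2^{-k}\ge\e}(\e\,2^{k})^{1/2}\le C$, a continuity-in-scale bootstrap then propagates the bound $D_\e(r)\le C(N)$ from $r\simeq\sqrt\lambda$ down to $r\simeq C(N)\e$, provided $C(N)$ is chosen large enough to absorb the (summable) errors.

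For the remaining scales $0<r\le C(N)\e$ I would set $r_0:=C(N)\e/\sqrt\lambda$ and $w(x):=u_\e(r_0 x)$ on $B_2$. Then $w$ solves $-\operatorname{div}\!\big(A\big(\tfrac{C(N)}{\sqrt\lambda}x\big)\nabla w\big)=0$, whose coefficients are Lipschitz with a constant depending only on $d,\lambda,M,N$, and the doubling already established for $u_\e$ on $[C(N)\e,\sqrt\lambda]$ shows that $w$ itself satisfies (\ref{double-1}) with $N$ replaced by some $N''=N''(d,\lambda,M,N)$. Applying the classical doubling inequality to $w$ and rescaling back then gives (\ref{main-estimate-2}) for all $0<r\le r_0$; together with the previous step this proves the theorem.

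The step I expect to be the main obstacle is the middle one: one must render the almost-monotonicity of $D_\e$ quantitative with errors that are genuinely \emph{summable} across dyadic scales --- not merely small at each fixed scale --- and then close the induction \emph{uniformly in $N$}, which is exactly why the comparison for the approximant $v_0$ must come from the exact monotonicity of its frequency rather than from a doubling inequality carrying an a priori uncontrolled constant. The accompanying ingredients --- the homogenization convergence rate under only the Lipschitz condition (\ref{smoothness}), the interior $H^2$-estimate for $v_0$, and the comparison between solid and spherical averages of $u_\e$ --- are routine but must be arranged so as not to destroy that summability.
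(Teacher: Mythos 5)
Your overall skeleton matches the paper's --- dispose of $\e\ge\e_0$ by the classical frequency argument, handle $r\lesssim\e$ by rescaling so the coefficients have a fixed Lipschitz constant, and use homogenization to bridge the intermediate scales --- but the \emph{mechanism} you propose for the bridge is genuinely different. The paper does \emph{not} attempt a quantitative almost-monotonicity with summable errors. Instead it works with ellipsoids $E_r(A)$ adapted to $\widehat A$ and runs a compactness-contradiction (Lemma \ref{d-lemma}): fixing the dyadic integer $m$ with $2^{2m+1}\ge N$, it shows that for $\e$ small, the bound $\fint_{E_2}\le 2^{2m+1}\fint_{E_1}$ self-propagates to $\fint_{E_1}\le 2^{2m+1}\fint_{E_{1/2}}$ with the \emph{same} constant. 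The contradiction is extracted not from an approximate monotonicity but from the rigidity of the equality case in log-convexity: a hypothetical sequence of counterexamples homogenizes to a harmonic function $w$ whose doubling ratio must equal exactly $2^{2m+1}$ at every scale, forcing $w$ to vanish to half-integer order at the origin, which is impossible. Because the propagated constant is exact, the dyadic iteration from scale $1$ down to scale $\sim\e/\e_0$ costs nothing; there is no accumulation of multiplicative errors to control.

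Your quantitative route is a recognizable alternative, but as you yourself flag, the middle step as written has a real gap, and it is worth naming precisely. The local approximant $v_0$ is tied to a single scale $r$ (it lives on $B_{2r}$ with $u_\e$'s boundary data), so ``the frequency of $v_0$ is exactly monotone'' does not by itself transport a bound on $D_0$ from $r\simeq\sqrt\lambda$ down to small $\rho$: at each dyadic scale you must introduce a \emph{new} approximant, and the comparison $|\fint_{B_\rho}|u_\e|^2-\fint_{B_\rho}|v_0|^2|\le C(\e/r)\fint_{B_{2r}}|u_\e|^2$ becomes a \emph{relative} error only after dividing by $\fint_{B_\rho}|u_\e|^2$, producing a factor like $(\e/r)\,D_\e(r)D_\e(r/2)\cdots$. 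So the per-step multiplicative error depends on the very doubling ratio you are trying to bound, and you need a genuine self-improvement/continuity-in-scale argument, with a threshold scale $C(N)\e$ below which it fails, to close the induction. Also two smaller points: the $L^\infty$ rate available under (\ref{smoothness}) is $O(\e/r)$, not $O((\e/r)^{1/2})$ (either is summable, but the exponent you quote is not what the paper's Section 2 gives); and ``exact'' monotonicity of the doubling ratio for $\mathcal L_0$ holds for ellipsoidal averages adapted to $\widehat A$ (after the linear change $SHS^{T}=I$), not for Euclidean balls, which is precisely why the paper phrases Theorem \ref{d-thm} in terms of $E_r(A)$ before converting back to balls via (\ref{relation}). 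Your scheme can likely be made rigorous, but it is substantially more technical than the soft compactness argument the paper uses, and in the form presented it is a sketch with the central step unproved.
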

 
 The doubling condition for $\mathcal{L}_1$, as a consequence of a monotonicity 
 formula, was proved in \cite{G-Lin}.
 The proof of Theorem \ref{main-theorem-2} for $\mathcal{L}_\e$
 uses a compactness argument from the theory of periodic  homogenization.
 The idea is that as $\e\to 0$,
 $u_\e$ converges strongly in $L^2$ to a solution $u_0$ of a second-order elliptic
 equation with constant coefficients.
 Together with a three-spheres theorem for $u_0$, this yields (\ref{main-estimate-2}) for some small $r>0$.
 By an iteration argument  we then obtain  (\ref{main-estimate-2}) for $C\e<r<1$.
 Finally, the small-scale case $0<r\le C\e$ is handled by a blow-up argument.
 
 The second key ingredient in the proof of Theorem \ref{main-theorem-1}
 is an approximation result. It allows us to utilize  the existing 
 estimates of nodal and singular sets for the homogenized operator $\mathcal{L}_0$.
 
 \begin{thm}\label{main-theorem-3}
 Assume that $A=A(y)$ satisfies conditions (\ref{ellipticity}), (\ref{periodicity}) and
 (\ref{smoothness}).
Let $u_\e\in H^1(B_2)$ be a weak solution of $\mathcal{L}_\e (u_\e)=0$ in $B_2$.
Then there exists $u_0\in H^1(B_1)$ such that
$\mathcal{L}_0(u_0)=0$ in $B_1$,
\begin{equation}\label{conv-rate-1}
 \| u_\e -u_0\|_{L^\infty(B_{3/4})}
 \le C \e \, \| u_\e\|_{L^2(B_{3/2})},
 \end{equation}
 and 
 \begin{equation}\label{2-30}
 \| u_0\|_{C^1(B_1)} \le C\, \| u_\e\|_{L^2(B_{3/2})},
 \end{equation}
 where $C$ depends only on $d$, $\lambda$ and $M$.
 Moreover, if $u_\e$ satisfies (\ref{double-1}) for some $N>1$ and
 $0<\e< \e_0$, then
 \begin{equation}\label{double-0}
 \fint_{B_1} |u_0|^2\, dx \le C(N)\fint_{B_{1/2}} |u_0|^2\, dx
 \end{equation}
 where $C(N)$ and $\e_0$ depend only on $d$, $\lambda$, $M$ and $N$.
 \end{thm}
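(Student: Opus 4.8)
The plan is to take $u_0$ to be the solution of the homogenized Dirichlet problem on $B_1$ with the same boundary values as $u_\e$, and to extract (\ref{conv-rate-1})--(\ref{double-0}) from the first-order two-scale expansion together with the uniform interior estimates of Avellaneda--Lin. \emph{Construction and the $C^1$ bound.} Since $\mathcal{L}_\e(u_\e)=0$ in $B_2$, the uniform interior $C^{1,\mu}$ estimate for periodic operators with Lipschitz coefficients gives, for every fixed $\mu\in(0,1)$, that $\|u_\e\|_{C^{1,\mu}(\overline{B_1})}\le C_\mu\|u_\e\|_{L^2(B_{3/2})}$. Let $u_0\in C^{1,\mu}(\overline{B_1})\cap C^\infty(B_1)$ be the solution of $\mathcal{L}_0(u_0)=0$ in $B_1$ with $u_0=u_\e$ on $\partial B_1$; this is a classical Dirichlet problem for a constant-coefficient operator with $C^{1,\mu}$ data. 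The maximum principle and boundary Schauder estimates for $\mathcal{L}_0$ yield $\|u_0\|_{C^{1,\mu}(\overline{B_1})}\le C\|u_\e\|_{C^{1,\mu}(\overline{B_1})}\le C\|u_\e\|_{L^2(B_{3/2})}$, which is (\ref{2-30}); interior estimates give in addition $\|\nabla^2 u_0\|_{L^\infty(B_{15/16})}\le C\|u_\e\|_{L^2(B_{3/2})}$, and --- this is the point --- if $\mu$ is chosen with $\mu>1/2$, the interior estimate $|\nabla^2 u_0(x)|\le C\,\mathrm{dist}(x,\partial B_1)^{\mu-1}\|u_0\|_{C^{1,\mu}(\overline{B_1})}$ gives $\nabla^2 u_0\in L^2(B_1)$ with $\|\nabla^2 u_0\|_{L^2(B_1)}\le C\|u_\e\|_{L^2(B_{3/2})}$ (here $2(\mu-1)>-1$ is used).

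\emph{The convergence rate (\ref{conv-rate-1}).} Let $\chi=(\chi_j)$ and $E=(E_j)$ be the correctors and flux correctors for $\mathcal{L}_\e$ (periodic, with $E_j$ antisymmetric, all bounded by (\ref{smoothness})), and put $v_\e=u_0+\e\,\chi_j(x/\e)\,\partial_j u_0$ (summation over $j$). The standard first-order homogenization identity --- using the corrector and flux-corrector equations, $\mathcal{L}_0(u_0)=0$, and the antisymmetry of $E_j$ --- gives in $B_1$
$$\mathcal{L}_\e(v_\e)=-\e\,\mathrm{div}(H),\qquad H=\big(E_j(x/\e)+A(x/\e)\chi_j(x/\e)\big)\nabla\partial_j u_0,$$
so $z:=u_\e-v_\e$ solves $\mathcal{L}_\e(z)=\e\,\mathrm{div}(H)$ in $B_1$ with $z=-\e\,\chi_j(x/\e)\partial_j u_0$ on $\partial B_1$. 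Since $\nabla^2 u_0\in L^2(B_1)$ and $E_j,\chi_j$ are bounded, $\|H\|_{L^2(B_1)}\le C\|u_\e\|_{L^2(B_{3/2})}$; writing $z=z_1+z_2$ with $z_1\in H^1_0(B_1)$ and $\mathcal{L}_\e(z_1)=\e\,\mathrm{div}(H)$, the energy estimate gives $\|z_1\|_{L^2(B_1)}\le C\e\|u_\e\|_{L^2(B_{3/2})}$, and $z_2=z-z_1$ solves $\mathcal{L}_\e(z_2)=0$ in $B_1$ with $\|z_2\|_{L^\infty(\partial B_1)}\le C\e\|u_0\|_{C^1(\overline{B_1})}$, hence $\|z_2\|_{L^\infty(B_1)}\le C\e\|u_\e\|_{L^2(B_{3/2})}$ by the maximum principle. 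Thus $\|z\|_{L^2(B_1)}\le C\e\|u_\e\|_{L^2(B_{3/2})}$, and since $H$ is bounded on $B_{15/16}$, the interior De Giorgi--Nash--Moser estimate for $\mathcal{L}_\e(z)=\e\,\mathrm{div}(H)$ gives $\|z\|_{L^\infty(B_{3/4})}\le C\big(\|z\|_{L^2(B_{15/16})}+\e\|H\|_{L^\infty(B_{15/16})}\big)\le C\e\|u_\e\|_{L^2(B_{3/2})}$. As $\|\e\,\chi_j(x/\e)\partial_j u_0\|_{L^\infty(B_{3/4})}\le C\e\|u_\e\|_{L^2(B_{3/2})}$, estimate (\ref{conv-rate-1}) follows.

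\emph{Propagation of the doubling condition.} Assume (\ref{double-1}) holds with constant $N$. By (\ref{double-1}) and Theorem \ref{main-theorem-2} (applied a bounded number of times, together with the trivial inclusions between concentric balls) one gets $\|u_\e\|_{L^2(B_{3/2})}\le C(N)\|u_\e\|_{L^2(B_{1/2})}$. Then (\ref{conv-rate-1}) gives $\|u_\e-u_0\|_{L^2(B_{1/2})}\le C(N)\e\|u_\e\|_{L^2(B_{1/2})}$, so choosing $\e_0=\e_0(N)$ with $C(N)\e_0\le 1/2$ we obtain, for $0<\e<\e_0$, $\|u_\e\|_{L^2(B_{1/2})}\le 2\|u_0\|_{L^2(B_{1/2})}$. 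Combining this with (\ref{2-30}), $\|u_0\|_{L^2(B_1)}\le C\|u_\e\|_{L^2(B_{3/2})}\le C(N)\|u_\e\|_{L^2(B_{1/2})}\le C(N)\|u_0\|_{L^2(B_{1/2})}$, which is (\ref{double-0}).

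\emph{Main obstacle.} The delicate part is obtaining the sharp rate $\e$, rather than $\e^{1/2}$, in (\ref{conv-rate-1}). The two-scale expansion produces the error $\e\,\mathrm{div}(H)$ with $H\sim\nabla^2 u_0$, which is only locally bounded in $B_1$; the familiar device of first localizing $v_\e$ by a cut-off and then running an energy estimate costs half a power of $\e$ through the resulting boundary layer. Avoiding the cut-off --- handling instead the boundary mismatch of $v_\e$ by the maximum principle for $z_2$ --- forces one to control $H$ in $L^2(B_1)$ \emph{up to} $\partial B_1$, and this is exactly where the uniform $C^{1,\mu}$ estimate of Avellaneda--Lin must be invoked with an exponent $\mu>1/2$. (One must also keep in mind that the homogenized problem is to be posed with $u_\e$'s own trace, which is $C^{1,\mu}$ by that same uniform estimate, so that $u_0$ inherits this regularity on $\overline{B_1}$.)
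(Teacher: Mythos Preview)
Your proof is correct, but it follows a genuinely different route from the paper's.

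The paper does \emph{not} take $u_0$ to be the solution of the homogenized Dirichlet problem with trace $u_\e$ on a sphere. Instead, it first chooses (by averaging Caccioppoli) a radius $r\in(9/8,5/4)$ on which $\|u_\e\|_{H^1(\partial B_r)}\le C\|u_\e\|_{L^2(B_{3/2})}$, and then proves a separate approximation theorem on $\Omega=B_r$ by a Green-function argument: writing $w_\e^x(y)=G_\e(x,y)-G_0(x,y)-\e\chi_j(y/\e)\partial_{y_j}G_0(x,y)$, applying the Green identity for $\mathcal{L}_\e$ against a cut-off of $w_\e^x$, and using the Rellich estimate $\|\nabla u_\e\|_{L^2(\partial\Omega)}\le C\|\nabla_{\tan}u_\e\|_{L^2(\partial\Omega)}$ together with the pointwise bound $|G_\e(x,y)-G_0(x,y)|\le C\e|x-y|^{1-d}$. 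The resulting $u_0$ is given by an explicit boundary-integral formula and does \emph{not} equal $u_\e$ on $\partial\Omega$ (its trace is $\omega_\e u_\e$ for a bounded oscillating factor $\omega_\e$). The doubling conclusion is then derived exactly as you do, via Theorem~\ref{main-theorem-2}.

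Your approach trades the Green-function/Rellich machinery for a direct two-scale expansion on $B_1$, with the key observation that the uniform Avellaneda--Lin $C^{1,\mu}$ estimate for $\mu>1/2$ makes $\nabla^2 u_0\in L^2(B_1)$ \emph{up to the boundary}, so the first-order corrector error $\e\,\mathrm{div}(H)$ can be absorbed without a cut-off and the boundary mismatch $-\e\chi_j(x/\e)\partial_j u_0$ is handled by the maximum principle. This is more self-contained (no Rellich identity, no $O(\e)$ Green-function comparison from the literature) and yields a $u_0$ with the conceptually cleaner property $u_0=u_\e$ on $\partial B_1$. The paper's route, on the other hand, gives an explicit integral representation for $u_0$, extends to general $C^{2,\alpha}$ domains, and avoids any appeal to the \emph{uniform} $C^{1,\mu}$ theory---the only periodic-homogenization ingredients it needs are the bounded flux corrector and the $O(\e)$ Green-function estimate.
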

 
 The paper is organized as follows.
 In Section 2 we provide a brief review of the homogenization theory for $\mathcal{L}_\e$ and 
 give the proof of Theorem  \ref{main-theorem-3}.
  The proof of Theorem \ref{main-theorem-2} is given in Section 3,
  while Theorem \ref{main-theorem-1} is proved in Section 4.
 
 Throughout the paper we will use $C$ to denote constants that may depend on 
 $d$, $\lambda$ and $M$.
 If a constant also depends on $N$, it will be denoted by $C(N)$.
 The summation convention that repeated indices are summed will be used.

 
 \section{\bf Approximation of solutions}
 
 Suppose that $A=A(y)$ is real, bounded measurable, and satisfies the ellipticity condition 
 \begin{equation}\label{e-1}
 \lambda |\xi|^2 \le \langle A(y) \xi , \xi \rangle \quad \text{ for any } \xi\in \R^d \text{ and a.e. } y\in \R^d,
 \end{equation}
 where $\lambda>0$.
 Also assume that $A$ satisfies the periodicity condition (\ref{periodicity}).
 Let $\chi (y)=(\chi_1(y), \dots,\chi_d (y)) \in H^1(\mathbb{T}^d; \R^d)$ denote the corrector
 for $\mathcal{L}_\e$, where $\mathbb{T}^d=\mathbb{R}^d/\mathbb{Z}^d$ and
 $\chi_j$ is the unique 1-periodic function in $H^1(\mathbb{T}^d)$ such that
 \begin{equation}\label{corrector}
 \left\{
 \aligned
 &  \mathcal{L}_1 (\chi_j ) =-\mathcal{L}_1 (y_j) \quad \text{ in } \mathbb{R}^d,\\
 & \int_{\mathbb{T}^d} \chi_j \, dy  =0.
 \endaligned
 \right.
 \end{equation}
 By the classical De Giorgi - Nash estimate, $\chi_j$ is H\"older continuous.
 Moreover, $\nabla \chi$ is bounded if $A$ is H\"older continuous.
 The homogenized operator for $\mathcal{L}_\e$ is given by
 $\mathcal{L}_0 =-\text{\rm div}(\widehat{A}\nabla )$, where
 $\widehat{A}=\big(\widehat{a}_{ij} \big)_{d\times d}$ and
 \begin{equation}\label{gomo-c}
 \widehat{a}_{ij}=\fint_{\mathbb{T}^d} \left\{ a_{ij} + a_{ik} \frac{\partial \chi_j}{\partial y_k} \right\} dy.
 \end{equation}
 It is known that the homogenized matrix $\widehat{A}$ also satisfies (\ref{e-1}) with the same $\lambda$.
 Moreover, if $A$ is symmetric and satisfies (\ref{ellipticity}), 
  the same is true for $\widehat{A}$. 
  We refer the reader to \cite{JKO-1994} for the proofs.
  
  Let 
  \begin{equation}\label{B}
  B(y)=A(y) +A(y)\nabla \chi (y) -\widehat{A};
  \end{equation}
  that is $B(y)=(b_{ij} (y))_{d\times d}$ with
  $$
  b_{ij}=a_{ij} + a_{ik} \frac{\partial \chi_j}{\partial y_k}-\widehat{a}_{ij}.
  $$
  Observe that $B$ is 1-periodic and by (\ref{gomo-c}) and (\ref{corrector}),
  $$
  \int_{\mathbb{T}^d} b_{ij} \, dy =0 \quad \text{ and } \quad \frac{\partial b_{ij}}{\partial y_i}=0.
  $$
  
  \begin{lemma}\label{flux-lemma}
  There exist 1-periodic functions $\phi_{kij} (y)$ in $H^1(\mathbb{T}^d)\cap L^\infty(\mathbb{T}^d)$,
  where $1\le i, j, k\le d$, such that
  \begin{equation}\label{flux-corrector}
  b_{ij}=\frac{\partial }{\partial y_k} \phi_{kij}
  \quad \text{ and } \quad
  \phi_{kij}=-\phi_{ikj}.
  \end{equation}
  \end{lemma}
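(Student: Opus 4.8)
The plan is to construct the potentials $\phi_{kij}$ explicitly by solving, for each fixed pair $(i,j)$, a vector-valued auxiliary problem on the torus $\mathbb{T}^d$. Fix $i,j$ and consider the 1-periodic, mean-zero vector field $b_{\cdot j}^{(i)} := (b_{1j}, \dots, b_{dj})$ where I treat $i$ as the running index; more precisely, for each fixed $j$ I want to capture the matrix $(b_{ij})_i$. Since $b_{ij}$ is 1-periodic with $\int_{\mathbb{T}^d} b_{ij}\, dy = 0$, there is a unique 1-periodic solution $f_{ij} \in H^2_{\mathrm{loc}}$ with $\int_{\mathbb{T}^d} f_{ij}\, dy = 0$ of the Poisson equation
\begin{equation*}
\Delta f_{ij} = b_{ij} \quad \text{ in } \mathbb{T}^d.
\end{equation*}
Then I would set
\begin{equation*}
\phi_{kij} := \frac{\partial f_{ij}}{\partial y_k} - \frac{\partial f_{kj}}{\partial y_i}.
\end{equation*}
By construction $\phi_{kij} = -\phi_{ikj}$, so the antisymmetry in the first two indices is immediate. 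It remains to verify the divergence identity $\partial_{y_k}\phi_{kij} = b_{ij}$ and to establish the regularity claims.

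For the divergence identity, compute
\begin{equation*}
\frac{\partial \phi_{kij}}{\partial y_k} = \frac{\partial^2 f_{ij}}{\partial y_k \partial y_k} - \frac{\partial}{\partial y_i}\Big( \frac{\partial f_{kj}}{\partial y_k}\Big) = \Delta f_{ij} - \frac{\partial}{\partial y_i} \Big( \sum_k \frac{\partial f_{kj}}{\partial y_k}\Big).
\end{equation*}
The first term is $b_{ij}$ by the defining equation for $f_{ij}$. For the second term, observe that $\sum_k \partial_{y_k} f_{kj}$ satisfies $\Delta\big(\sum_k \partial_{y_k} f_{kj}\big) = \sum_k \partial_{y_k} b_{kj} = \partial_{y_k} b_{kj}$, which vanishes by the divergence-free property $\partial b_{ij}/\partial y_i = 0$ recorded just before the lemma (with the roles of $i,k$ matched appropriately; here the contracted index is the first index of $b$). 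A 1-periodic harmonic function on $\mathbb{T}^d$ is constant, so $\sum_k \partial_{y_k} f_{kj}$ is constant, hence its $y_i$-derivative is zero. Therefore $\partial_{y_k}\phi_{kij} = b_{ij}$, as desired.

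For the regularity: since $b_{ij} \in L^\infty(\mathbb{T}^d) \subset L^p(\mathbb{T}^d)$ for all $p < \infty$, interior (here, global on the compact manifold $\mathbb{T}^d$) Calderón–Zygmund estimates give $f_{ij} \in W^{2,p}(\mathbb{T}^d)$ for every $p<\infty$, hence by Sobolev embedding $\nabla f_{ij} \in C^{0,\alpha}(\mathbb{T}^d) \subset H^1(\mathbb{T}^d) \cap L^\infty(\mathbb{T}^d)$ for any $\alpha \in (0,1)$; taking a further derivative, $\phi_{kij} = \partial_{y_k} f_{ij} - \partial_{y_i} f_{kj}$ inherits membership in $H^1(\mathbb{T}^d) \cap L^\infty(\mathbb{T}^d)$. (One can also just note directly that $\nabla^2 f_{ij} \in L^p$ for all finite $p$ gives $\phi_{kij} \in W^{1,p}$, which embeds into $L^\infty$ for $p>d$, and lies in $H^1$ since $p$ can be taken $\ge 2$.) Periodicity of $\phi_{kij}$ is automatic from periodicity of $f_{ij}$.

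The only genuinely delicate point is solvability and uniqueness of the periodic Poisson problem for $f_{ij}$ and the justification that a 1-periodic harmonic $H^1$ function is constant — both are standard (Lax–Milgram on $\dot H^1(\mathbb{T}^d)$, the subspace of mean-zero functions, using the Poincaré inequality on the torus), so I anticipate no real obstacle. The main thing to be careful about is bookkeeping of the three indices: the contraction index in the divergence-free condition on $B$ must be matched with the summation index $k$ in $\partial_{y_k}\phi_{kij}$, and the antisymmetrization must be done in the correct pair of slots ($k$ and $i$, with $j$ a spectator) so that the flux identity closes. Once the index conventions are pinned down, the argument is essentially the classical construction of the flux corrector in periodic homogenization.
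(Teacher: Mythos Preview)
Your construction is correct and is precisely the classical argument the paper defers to via the citation ``See e.g.\ \cite[p.1015]{KLS2}'': solve $\Delta f_{ij}=b_{ij}$ on $\mathbb{T}^d$, set $\phi_{kij}=\partial_{y_k}f_{ij}-\partial_{y_i}f_{kj}$, and use the divergence-free condition $\partial_{y_i}b_{ij}=0$ to kill the second term. The only point to make explicit is that your claim $b_{ij}\in L^\infty$ (needed for the Calder\'on--Zygmund step giving $\phi\in L^\infty$) relies on $\nabla\chi\in L^\infty$, which holds under the Lipschitz hypothesis \eqref{smoothness} assumed in the paper's main results; with that in hand, your regularity argument goes through as written.
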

  
  \begin{proof}
  See e.g. \cite[p.1015]{KLS2}.
  \end{proof}
  
 The function $\phi =(\phi_{kij})$ is called the flux corrector for $\mathcal{L}_\e$.
  
  \begin{lemma}\label{lemma-2.1}
 Let $u_\e\in H^1(\Omega)$ and $u_0\in H^2(\Omega)$.
 Suppose that $\mathcal{L}_\e (u_\e)=\mathcal{L}_0 (u_0)$ in $\Omega$.
 Then
 \begin{equation}\label{dual-f}
 \aligned
&  \mathcal{L}_\e \left\{ u_\e -u_0 -\e \chi_j  (x/\e)\frac{\partial u_0}{\partial x_j} \right\}\\
& =-\e \frac{\partial}{\partial x_i} \left\{ \phi_{kij}(x/\e) \frac{\partial^2 u_0}{\partial x_k\partial x_j} \right\}
 +\e \text{\rm div} \left( \chi_j(x/\e) A(x/\e) \nabla \frac{\partial u_0}{\partial x_j} \right),
 \endaligned
 \end{equation}
 where $\phi=(\phi_{kij})$ is given by Lemma \ref{flux-lemma}
  \end{lemma}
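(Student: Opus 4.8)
The plan is to obtain \eqref{dual-f} by a direct computation — the classical two-scale expansion of periodic homogenization — in which the extra factor of $\e$ on the right-hand side comes from Lemma \ref{flux-lemma}. Write $w_\e := u_\e - u_0 - \e\,\chi_j(x/\e)\,\partial_j u_0$. First I would expand $\mathcal{L}_\e w_\e$ by linearity as $\mathcal{L}_\e u_\e - \mathcal{L}_\e u_0 - \e\,\mathcal{L}_\e\big(\chi_j(x/\e)\,\partial_j u_0\big)$ and use the hypothesis $\mathcal{L}_\e u_\e = \mathcal{L}_0 u_0$ to replace the first two terms by $\mathcal{L}_0 u_0 - \mathcal{L}_\e u_0 = \partial_i\big\{(a_{ij}(x/\e) - \widehat{a}_{ij})\,\partial_j u_0\big\}$. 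For the remaining term I would apply the chain and product rules, $\nabla\big(\chi_j(x/\e)\,\partial_j u_0\big) = \e^{-1}(\nabla\chi_j)(x/\e)\,\partial_j u_0 + \chi_j(x/\e)\,\nabla\partial_j u_0$; multiplying by $-A(x/\e)$, taking the divergence, and multiplying by $\e$, the factor $\e^{-1}$ in the first contribution cancels, and one is left with $\partial_i\big\{a_{ik}(x/\e)(\partial_k\chi_j)(x/\e)\,\partial_j u_0\big\} + \e\,\text{\rm div}\big(\chi_j(x/\e)A(x/\e)\nabla\partial_j u_0\big)$.

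Adding these contributions, the $O(1)$ terms collapse, by the very definition \eqref{B} of $B=(b_{ij})$, into $\partial_i\big\{[\,a_{ij}(x/\e) - \widehat{a}_{ij} + a_{ik}(x/\e)(\partial_k\chi_j)(x/\e)\,]\,\partial_j u_0\big\} = \partial_i\big\{b_{ij}(x/\e)\,\partial_j u_0\big\}$, so that $\mathcal{L}_\e w_\e = \partial_i\big\{b_{ij}(x/\e)\,\partial_j u_0\big\} + \e\,\text{\rm div}\big(\chi_j(x/\e)A(x/\e)\nabla\partial_j u_0\big)$. The key step is to extract a power of $\e$ from the first term using the flux corrector: since $b_{ij} = \partial_k\phi_{kij}$ we have $b_{ij}(x/\e) = \e\,\partial_{x_k}\big(\phi_{kij}(x/\e)\big)$, hence by the product rule $\partial_i\big\{b_{ij}(x/\e)\,\partial_j u_0\big\} = \e\,\partial_i\partial_k\big(\phi_{kij}(x/\e)\,\partial_j u_0\big) - \e\,\partial_i\big(\phi_{kij}(x/\e)\,\partial_k\partial_j u_0\big)$. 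I expect the small point here to be showing that the first term on the right is identically zero: summing over $i$ and $k$ and relabelling $i\leftrightarrow k$, the symmetry of the mixed second derivatives together with the antisymmetry $\phi_{kij} = -\phi_{ikj}$ forces this sum to equal minus itself. What is left is exactly the right-hand side of \eqref{dual-f}.

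The one genuinely technical point is regularity. Since $\chi_j$ and $\phi_{kij}$ are a priori only in $H^1(\mathbb{T}^d)\cap L^\infty(\mathbb{T}^d)$ and $u_0$ only in $H^2$, products such as $\chi_j(x/\e)\,\partial_j u_0$ need not belong to $H^1$, and the vanishing of $\e\,\partial_i\partial_k\big(\phi_{kij}(x/\e)\,\partial_j u_0\big)$ is not a pointwise statement. I would therefore read \eqref{dual-f} as an identity in $H^{-1}(\Omega)$ — every term is well defined there, e.g. $\phi_{kij}\in L^\infty$ and $u_0\in H^2$ give $\phi_{kij}(x/\e)\,\partial_k\partial_j u_0\in L^2$ and likewise for the last term — and justify the cancellation by testing against $\psi\in C_c^\infty(\Omega)$, where the exchange $i\leftrightarrow k$ reduces it to the trivial identity $\sum_{i,k}\int \phi_{kij}(x/\e)\,\partial_j u_0\,\partial_i\partial_k\psi\,dx = -\sum_{i,k}\int \phi_{kij}(x/\e)\,\partial_j u_0\,\partial_i\partial_k\psi\,dx$; alternatively one first proves \eqref{dual-f} for $u_0\in C^\infty$ and passes to the limit using only the $L^\infty$ bounds on the correctors. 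Apart from this bookkeeping the argument is a routine differentiation, so I do not anticipate a serious obstacle.
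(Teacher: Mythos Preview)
Your computation is correct and is precisely the standard two-scale expansion argument; the paper itself does not give a proof but simply cites \cite[p.1016]{KLS2}, where this very calculation appears. Your handling of the regularity issue (reading \eqref{dual-f} in $H^{-1}$ and justifying the cancellation $\partial_i\partial_k\big(\phi_{kij}(x/\e)\,\partial_j u_0\big)=0$ distributionally via the antisymmetry of $\phi$) is the right way to make the formal manipulation rigorous at the stated regularity level.
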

  
  \begin{proof}
  See \cite[p.1016]{KLS2}.
  \end{proof}

 \begin{thm}\label{approx-thm-1}
Suppose that $A$ is symmetric and satisfies conditions (\ref{ellipticity}),
(\ref{periodicity}) and (\ref{smoothness}).
Let $u_\e\in H^1(\Omega)$ be the weak solution of the Dirichlet problem,
\begin{equation}\label{DP-1}
\left\{
\aligned
\mathcal{L}_\e (u_\e) & =0  & \quad  &\text{ in } \Omega,\\
u_\e & = f & \quad & \text{ on } \partial \Omega,
\endaligned
\right.
\end{equation}
where $f\in H^1(\partial \Omega)$ and $\Omega=B_r$ for some $9/8\le r\le 3/2$.
Then there exists $u_0\in H^1 (\Omega)$ such that
$\mathcal{L}_0 (u_0)=0$ in $\Omega$,
\begin{equation}\label{rate-10}
\|u_0\|_{C^1(B_1)} \le C\,  \| f\|_{L^2(\partial\Omega)},
\end{equation}
and
\begin{equation}\label{conv-rate}
\| u_\e  -u_0 \|_{L^\infty (B_{3/4})}
\le C \e\, \| f\|_{H^{1/2}(\partial \Omega)},
\end{equation}
where $C$ depends only on $d$, $\lambda$ and $M$.
 \end{thm}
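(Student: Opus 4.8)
The plan is to take $u_0\in H^1(\Omega)$ to be the weak solution of the homogenized Dirichlet problem with the same boundary data, $\mathcal{L}_0(u_0)=0$ in $\Omega$, $u_0=f$ on $\partial\Omega$. The Lipschitz bound \eqref{rate-10} is then routine: writing $u_0(x)=\int_{\partial\Omega}P_\Omega(x,\omega)f(\omega)\,dS(\omega)$ with $P_\Omega$ the Poisson kernel of the constant-coefficient operator $\mathcal{L}_0$ on the ball $\Omega=B_r$, and using that $\operatorname{dist}(B_{17/16},\partial\Omega)\ge\tfrac1{16}$ so that $P_\Omega(x,\cdot)$ is bounded for $x\in B_{17/16}$ (uniformly, by the ellipticity of $\widehat A$), one gets $\|u_0\|_{L^\infty(B_{17/16})}\le C\|f\|_{L^1(\partial\Omega)}\le C\|f\|_{L^2(\partial\Omega)}$; since $\mathcal{L}_0$ has constant coefficients, the interior estimate $\|u_0\|_{C^1(B_1)}\le C\|u_0\|_{L^\infty(B_{17/16})}$ then yields \eqref{rate-10}.

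For \eqref{conv-rate} the first step is the $L^2$ rate $\|u_\e-u_0\|_{L^2(\Omega)}\le C\e\|f\|_{H^{1/2}(\partial\Omega)}$. Let $\chi$ and $\phi=(\phi_{kij})$ be the corrector and the flux corrector of Lemma \ref{flux-lemma}; since $A$ is Lipschitz, $\chi$, $\nabla\chi$ and $\phi$ are bounded. Choose a cutoff $\beta_\e$ that equals $1$ off the $\e$-neighborhood of $\partial\Omega$, vanishes near $\partial\Omega$, and satisfies $|\nabla\beta_\e|\le C\e^{-1}$, and set
\begin{equation*}
w_\e=u_\e-u_0-\e\,\chi_j(x/\e)\,\beta_\e\,\frac{\partial u_0}{\partial x_j}\in H_0^1(\Omega).
\end{equation*}
On the fixed interior region $\{\beta_\e=1\}$, where $u_0$ is $C^\infty$, Lemma \ref{lemma-2.1} rewrites $\mathcal{L}_\e(w_\e)$ as $\e$ times the divergence of a field bounded by $C|\nabla^2u_0|$; on the $O(\e)$ boundary shell the remaining terms carry a factor $\e$ or the factor $\e\nabla\beta_\e$ times $\chi(x/\e)$ and $\nabla u_0$. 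Thus $\mathcal{L}_\e(w_\e)=\operatorname{div}(G_\e)$ weakly, so coercivity (from \eqref{ellipticity}) gives $\|\nabla w_\e\|_{L^2(\Omega)}\le C\|G_\e\|_{L^2(\Omega)}$ and hence $\|u_\e-u_0\|_{L^2(\Omega)}\le C\|G_\e\|_{L^2(\Omega)}+C\e\|f\|_{H^{1/2}(\partial\Omega)}$.

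To pass to the interior $L^\infty$ estimate, note $u_0$ is smooth on $B_{15/16}$ with $\|u_0\|_{C^2(B_{15/16})}\le C\|u_0\|_{H^1(\Omega)}\le C\|f\|_{H^{1/2}(\partial\Omega)}$. Put $\widetilde w_\e=u_\e-u_0-\e\,\chi_j(x/\e)\,\partial_j u_0$ on $B_{15/16}$; Lemma \ref{lemma-2.1} gives $\mathcal{L}_\e(\widetilde w_\e)=\e\operatorname{div}(F_\e)$ with $\|F_\e\|_{L^\infty(B_{15/16})}\le C\|u_0\|_{C^2(B_{15/16})}$, and the De Giorgi--Nash--Moser local boundedness estimate for divergence-form equations with an $L^\infty$ right-hand side yields
\begin{equation*}
\|\widetilde w_\e\|_{L^\infty(B_{3/4})}\le C\big(\|\widetilde w_\e\|_{L^2(B_{15/16})}+\e\,\|F_\e\|_{L^\infty(B_{15/16})}\big).
\end{equation*}
Since $\|\widetilde w_\e\|_{L^2(B_{15/16})}\le\|u_\e-u_0\|_{L^2(\Omega)}+\e\,\|\chi(x/\e)\nabla u_0\|_{L^2(B_{15/16})}$, the $L^2$ rate and \eqref{rate-10} bound the right-hand side by $C\e\|f\|_{H^{1/2}(\partial\Omega)}$; adding back $\e\chi_j(x/\e)\partial_j u_0$, which is $O(\e\|f\|_{L^2(\partial\Omega)})$ on $B_{3/4}$ by \eqref{rate-10}, gives \eqref{conv-rate}.

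The step I expect to be the main obstacle is the bound on $\|G_\e\|_{L^2(\Omega)}$, equivalently the $L^2$ convergence rate with only $H^{1/2}$ boundary data: when $f$ is merely in $H^{1/2}(\partial\Omega)$ the gradient $\nabla u_0$ need not be uniformly in $L^2$ up to $\partial\Omega$, so the $\e$-shell contributions in $G_\e$ must be handled through weighted (Hardy-type) estimates for $u_0$ near $\partial\Omega$, together with the $L^\infty$ bounds on $\chi$, $\phi$ and a Steklov smoothing of $\beta_\e\nabla u_0$ at scale $\e$; this is exactly the technical core of the $L^2$ convergence-rate theory in periodic homogenization, which one may instead invoke directly. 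Every other step above is standard.
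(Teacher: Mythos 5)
Your proof is correct in outline, but it takes a genuinely different route from the paper, and the difference is worth understanding. The most important divergence is in the choice of $u_0$: you take the homogenized Dirichlet solution with the \emph{same} boundary data $f$, while the paper defines $u_0$ explicitly by the Green-function formula (\ref{estimate-2.4}); the paper's $u_0$ does \emph{not} satisfy $u_0=f$ on $\partial\Omega$ (indeed a Remark right after the proof points out that $u_0 = \omega_\e f$ on $\partial\Omega$ with $\|\omega_\e\|_{H^{1/2}(\partial\Omega)}\sim \e^{-1/2}$). That freedom — the theorem only asks for \emph{some} $\mathcal{L}_0$-solution $u_0$, not one matching the boundary data — is precisely what lets the paper avoid the boundary-layer analysis that you identify as the ``main obstacle.'' Concretely, the paper fixes $x\in B_{3/4}$, builds $w^x_\e(y)=G_\e(x,y)-G_0(x,y)-\e\chi_j(y/\e)\partial_{y_j}G_0(x,y)$, multiplies by a cutoff $\eta(y-x)$ vanishing near $y=x$, and integrates $\mathcal{L}_\e(v^x_\e)\cdot u_\e$ by parts using the symmetry of $A$ (Green's identity) together with Lemma \ref{lemma-2.1}, the Green-function estimates $|\nabla^k_y G_0(x,y)|\le C|x-y|^{1-d-k+1}$ and $|G_\e-G_0|\le C\e|x-y|^{1-d}$ from \cite{KLS-2014}, and the Rellich estimate from \cite{KS-1}. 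This gives a pointwise bound on $u_\e(x)-u_0(x)$ directly, with no $L^2$-rate for the Dirichlet problem invoked and no cutoff $\beta_\e$ or weighted/Steklov smoothing needed.

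A second point: your $L^2$-rate step claims $\|u_\e-u_0\|_{L^2(\Omega)}\le C\e\|f\|_{H^{1/2}(\partial\Omega)}$, but the off-the-shelf results you would cite (Kenig--Lin--Shen, or Shen's book) deliver $C\e\|f\|_{H^1(\partial\Omega)}$ for the Dirichlet problem; getting $H^{1/2}$ is not available in the form you want, and your plan to push it through Hardy-weight/Steklov estimates would require genuinely new work. You should be aware, though, that the paper's own proof also only produces $C\e\|f\|_{H^1(\partial\Omega)}$ — the Rellich step and the bound (\ref{estimate-2.2}) are in the $H^1(\partial\Omega)$ norm — so the $H^{1/2}$ in the theorem statement is essentially an overstatement; in the application (Theorem \ref{main-theorem-3}), the $H^1(\partial B_r)$ norm of $u_\e$ is what is actually controlled, via a Fubini selection of a good radius $r$. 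Finally, your interior upgrade via De Giorgi--Nash--Moser applied to $\widetilde w_\e=u_\e-u_0-\e\chi^\e\nabla u_0$ on an interior ball is correct and is a sensible alternative to the paper's direct pointwise Green-function argument. In summary: your approach works if you settle for $\|f\|_{H^1(\partial\Omega)}$ (as the paper itself effectively does), and it is more modular, but it requires the $L^2$ convergence-rate machinery with nonzero Dirichlet data as input, which the paper bypasses by not insisting that $u_0=f$ on $\partial\Omega$.
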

 
 \begin{proof}
 Let $G_\e (x, y)$ and $G_0(x, y)$ denote the Green functions in $\Omega$ for $\mathcal{L}_\e$ and $\mathcal{L}_0$,
 respectively.
 Fix $x\in \Omega$, let
 $$
 w^x_\e (y)=G_\e (x, y) -G_0(x, y) -\e \chi_j (y/\e) \frac{\partial }{\partial y_j} \big\{ G_0 (x, y)\big\},
 $$
 and 
 $$
 v^x_\e (y)=w^x_\e (y) \eta(y-x),
 $$
  where $\eta$ is a function in $C^\infty(\R^d)$ such that $0\le \eta\le 1$,
 $$
 \eta\equiv 0 \quad \text{ in } B(0,1/32) \quad \text{ and } \quad \eta\equiv 1 \quad \text{ in } \Omega \setminus B(0, 1/16).
 $$
 Since $A$ is symmetric and $\mathcal{L}_\e (u_\e)=0$ in $\Omega$,
 by the Green identity,
 \begin{equation}\label{estimate-2.01}
   \int_{\partial\Omega} \frac{\partial v^x_\e}{\partial \nu_\e} \cdot u_\e \, d\sigma
 -\int_{\partial\Omega}
 v^x_\e \cdot \frac{\partial u_\e}{\partial\nu_\e}\, d\sigma
 =-\int_\Omega \mathcal{L}_\e (v^x_\e) \cdot u_\e\, dy
 \end{equation}
 for any $x\in \Omega$,
 where 
 $$
 \frac{\partial u_\e}{\partial \nu_\e}=n\cdot A(y/\e)\nabla u_\e
 $$
  denotes the conormal derivative of $u_\e$
 on $\partial\Omega$ associated with the operator $\mathcal{L}_\e$.
 We will show that for any $x\in B_{3/4}$,
 \begin{equation}\label{estimate-2.1}
 \Big| \int_\Omega  \mathcal{L}_\e (v^x_\e)
 \cdot u_\e \, dy \Big|
 \le C \e \, \| f\|_{H^{1/2}(\partial\Omega)},
 \end{equation}
 where $C$ depends only on $d$, $\lambda$ and $M$.
 Note that if $x\in B_{3/4}$,
 $$
 \aligned
 \Big| \int_{\partial\Omega} v^x_\e \cdot \frac{\partial u_\e}{\partial\nu_\e}\, d\sigma (y) \Big|
 &=\e\,  \Big| \int_{\partial\Omega}
 \chi_j (y/\e) \frac{\partial}{\partial y_j} \big\{ G_0 (x, y)\big\}\frac{\partial u_\e}{\partial\nu_\e}\, d\sigma (y) \Big|\\
 &\le C \e \int_{\partial\Omega} |\nabla u_\e|\, d\sigma\\
 &\le C \e \, \|\nabla u_\e\|_{L^2(\partial\Omega)}\\
 &\le C \e\,  \| f\|_{H^1(\partial\Omega)},
 \endaligned
 $$
 where we have used the estimate $|\nabla_y G_0 (x, y)|\le C |x-y|^{1-d}$
 for the first inequality and the Rellich estimate,
 $$
 \| \nabla u_\e\|_{L^2(\partial\Omega)} \le  C\,  \|\nabla_{tan} u_\e\|_{L^2(\partial\Omega)}
 $$
(see \cite{KS-1}) for the last.
This, together with (\ref{estimate-2.01}) and (\ref{estimate-2.1}), gives
\begin{equation}\label{estimate-2.2}
\Big|\int_{\partial\Omega} 
\frac{\partial v^x_\e}{\partial \nu_\e} \cdot u_\e\, d\sigma \Big|
\le C \e\,  \| f\|_{H^1(\partial\Omega)}
\end{equation}
for any $x\in B_{3/4}$.
Observe that on $\partial\Omega$, $u_\e=f$ and that if $x\in B_{3/4}$,
\begin{equation}\label{estimate-2.3}
\aligned
\frac{\partial v^x_\e}{\partial \nu_\e}
&=\frac{\partial w^x_\e}{\partial \nu_\e}\\
&=\frac{\partial }{\partial \nu_\e (y)} \Big\{ G_\e (x, y) \Big\}
-\frac{\partial}{\partial \nu_\e (y)} \Big\{ G_0(x, y) \Big\}\\
&\qquad\qquad
-\frac{\partial}{\partial \nu_\e (y)}
\Big\{ \e \chi_j (y/\e)\Big\} \frac{\partial }{\partial y_j}  \big\{ G_0 (x, y) \big\}\\
&\qquad\qquad
-\e \chi_j (y/\e) \frac{\partial}{\partial \nu_\e(y)}
\left\{ \frac{\partial}{\partial y_j} \big\{ G_0 (x, y) \big\} \right\}.
\endaligned
\end{equation}
We now let
\begin{equation}\label{estimate-2.4}
\aligned
u_0(x)
 &=-\int_{\partial\Omega} \frac{\partial}{\partial \nu_\e (y)} 
 \Big\{ G_0 (x, y) \Big\} f(y)\, d\sigma(y)\\
 &\qquad
 -\int_{\partial\Omega}
 \frac{\partial}{\partial \nu_\e (y)}
\Big\{ \e \chi_j (y/\e)\Big\} \frac{\partial }{\partial y_j}  \big\{ G_0 (x, y) \big\} f(y)\, d\sigma (y)
\endaligned
\end{equation}
for $x\in \Omega$.
Then $\mathcal{L}_0 (u_0)=0$ in $\Omega$, and
$$
\| u_0\|_{C^1(B_1)}
\le C \int_{\partial\Omega} | f|\, d\sigma
\le C\,  \| f\|_{L^2(\partial\Omega)},
$$
where we have used the estimate $\|\nabla \chi\|_\infty\le C$.
Since
$$
u_\e (x)=-\int_{\partial\Omega} 
\frac{\partial }{\partial \nu_\e (y)} \Big\{ G_\e (x, y) \Big\} f(y)\, d\sigma (y),
$$
it follows from (\ref{estimate-2.2}) and (\ref{estimate-2.3}) that for any $x\in B_{3/4}$,
\begin{equation}\label{estimate-2.6}
| u_\e (x)-u_0 (x)|\le 
C \e \, \| f\|_{H^1(\partial\Omega)}, 
\end{equation}
where we have used the estimate $|\nabla_y^2 G_0(x, y)|\le C |x-y|^{-d}$.

It remains to prove (\ref{estimate-2.1}).
To this end, we first note that
$$
\aligned
\mathcal{L}_\e (v^x_\e)
&=-\text{\rm div} \big(A^\e \nabla (w_\e^x \eta^x)\big)\\
&= -\text{\rm div} \big(A^\e \nabla w^x_\e\big) \eta^x
-A^\e\nabla w^x_\e \cdot \nabla \eta^x -\text{\rm div} \big(A^\e (\nabla \eta^x)w_\e^x\big),
\endaligned
$$
where  $A^\e (y)=A(y/\e)$ and $\eta^x (y)=\eta (y-x)$.
Since 
$$
\mathcal{L}_\e \big\{ G_\e (x, \cdot)\big\} =\mathcal{L}_0 \big\{ G_0(x, \cdot)\big\}=0 \quad \text{ in } 
\Omega \setminus \{ x\},
$$
it follows by Lemma \ref{lemma-2.1} that
\begin{equation}\label{w-eq}
\aligned
-\text{\rm div} \big( A^\e \nabla w_\e^x \big)
&=-\e \frac{\partial}{\partial y_i}
\left\{ \phi_{kij} (y/\e) \frac{\partial^2}{\partial y_k\partial y_j}
\big\{ G_0 (x, y) \big\} \right\}\\
&\qquad
+\e\,  \text{\rm div}\left\{ A(y/\e) \chi_j(y/\e) \nabla _y \frac{\partial}{\partial y_j} \big\{ G_0 (x, y) \big\} \right\},
\endaligned
\end{equation}
where $\phi=(\phi_{kij})$ is given by Lemma \ref{flux-lemma}.
Using integration by parts,
this leads to
\begin{equation}\label{estimate-2.10}
\aligned
\Big| \int_\Omega
\mathcal{L}_\e (v_\e^x) \cdot u_\e\, dy \Big|
&\le C \e\int_{\partial\Omega} |\nabla_y^2 G_0(x, y)| |u_\e|\, d\sigma (y) \\
&\qquad+ C \e \int_\Omega 
|\nabla_y^2 G_0 (x, y)| |\nabla \eta^x| |u_\e|\, dy\\
&\qquad + C \e \int_\Omega  |\nabla^2_y G_0 (x, y)| |\eta^x| |\nabla u_\e|\, dy\\
& \qquad + C \int_\Omega |\nabla w_\e^x ||\nabla \eta^x| |u_\e|\, dy\\
&\qquad+ C \int_\Omega |w_\e^x| |\nabla \eta^x |\nabla u_\e|\, dy,
\endaligned
\end{equation}
where we have used the fact $\|\chi\|_\infty +\|\phi\|_\infty \le C$.

Finally, to bound the RHS of (\ref{estimate-2.10}), we use the fact that 
$|\nabla^2_y G_0 (x, y)|\le C |x-y|^{-d}$ and $\eta^x = 0$ in $B(x, 1/32)$.
As a result, the first three terms in the RHS of (\ref{estimate-2.10}) are bounded by
$$
C\e \int_{\partial\Omega} |u_\e|\, d\sigma
+ C \e \int_\Omega \big( |\nabla u_\e| +|u_\e|\big)\, dy.
$$
Note that for $y\in \Omega\setminus B(x, 1/64)$,
$$
\aligned
|w^x_\e (y)|
& \le |G_\e (x, y)-G_0(x, y)| + C \e |\nabla_y G_0(x, y)|\\
&\le C \e |x-y|^{1-d}\\
& \le C\e
\endaligned
$$
 (see \cite{KLS-2014}).
 Also, in view of (\ref{w-eq}), we may use  Caccioppoli's inequality to deduce that
 $$
 \aligned
&  \int_{B(x, 1/16)\setminus B(x, 1/32)}
 |\nabla w_\e^x|^2\, dy\\
 & \quad \le  C \int_{B(x, 1/8)\setminus B(x, 1/64)}
 \big( | w_\e^x|^2 + \e^2 |\nabla_y^2 G_0(x, y)|^2 \big) \, dy \\
 &\quad \le C \e^2.
 \endaligned
 $$
 Hence, the last two terms in the RHS of (\ref{estimate-2.10})
 are bounded by
 $$
 C\e \left(\int_\Omega \big( |\nabla u_\e|^2 +|u_\e|^2 \big)\, dy \right)^{1/2}.
 $$
 In summary, we have proved that for any $x\in B_{3/4}$,
 $$
 \aligned
 \Big| \int_\Omega
\mathcal{L}_\e (v_\e^x) \cdot u_\e\, dy \Big|
& \le C \e \big\{  \| u_\e\|_{H^1(\Omega)} +\| u_\e\|_{L^1(\partial\Omega)} \big\} \\
&\le C \e \, \| f\|_{H^{1/2}(\partial\Omega)}.
\endaligned
$$
 This completes the proof.
 \end{proof}
 
 \begin{remark}
 {\rm
 Theorem \ref{approx-thm-1} continues to hold for a bounded $C^{2, \alpha}$
 domain $\Omega$ in $\R^d$ with $B_{3/4}$ replaced by any subdomain 
 $\Omega^\prime$ such that dist$(\Omega^\prime, \partial\Omega)>0$. 
 The smoothness condition on $\partial\Omega$ ensures the 
 pointwise estimate $|\nabla_y^2 G_0(x, y)|\le C |x-y|^{-d}$ for $y\in \overline{\Omega}$.
 }
 \end{remark}
 
 \begin{remark}
 {\rm
 The function $u_0$ given by (\ref{estimate-2.4}) does not agree with $u_\e$ on $\partial\Omega$.
 Indeed, using the fact that $G_0(x, y)=0$ for $y\in \partial\Omega$ and thus
 $$
 \frac{\partial}{\partial y_j} \big\{ G_0 (x, y) \big\}
 = n_j (y) \frac{\partial}{\partial n(y)} \big\{ G_0(x, y)\big\},
 $$
 it is not hard to see that $u_0 =\omega_\e f$ on $\partial\Omega$,
 where $w_\e(y)=h(y, y/\e)$  and $h(x, y)$ is 1-periodic in the $y$ variable.
 In particular, we have
 $$
 \|\omega_\e \|_{L^\infty(\partial\Omega)} \le C \quad \text{ and } \quad 
 \|\omega_\e\|_{H^{1/2}(\partial\Omega)} \le C \e^{-1/2}.
 $$
 By the square function estimate for $\mathcal{L}_0$, we obtain 
 \begin{equation}\label{sq-estimate}
 \aligned
&  \int_\Omega |\nabla u_0(x) |^2 \, \text{\rm dist}(x, \partial\Omega)\, dx
 +\int_\Omega |u_0(x)|^2\, dx\\
 & \qquad
 \le C \, \| u_0\|_{L^2(\partial\Omega)}^2
 \le  C\,  \| f\|_{L^2(\partial\Omega)}^2,
 \endaligned
 \end{equation}
 where $C$ depends only on $d$, $\lambda$ and $M$.
 Estimate (\ref{sq-estimate}) is not used in this paper.
 }
 \end{remark}
 
 We now give the proof of Theorem \ref{main-theorem-3}.
  
 \begin{proof}[\bf Proof of Theorem \ref{main-theorem-3}]
 
 By Caccioppoli's inequality,
 \begin{equation}\label{2-31}
 \int_{B_{5/4}} |\nabla u_\e|^2 \, dx
 \le C \int_{B_{3/2}} |u_\e|^2\, dx.
 \end{equation}
 It follows that there exists some $r\in (9/8,5/4)$ such that
 \begin{equation}\label{2-32}
 \int_{\partial B_r} |\nabla u_\e|^2\, d\sigma  +\int_{\partial B_r} |u_\e|^2\, d\sigma
 \le C \int_{B_{3/2}} |u_\e|^2\, dx.
 \end{equation}
 For otherwise we may integrate the reverse inequality of (\ref{2-32}) in $r$ over $(9/8, 5/4)$
 to obtain an inequality that is in contradiction with (\ref{2-31}).
 We now apply Theorem \ref{approx-thm-1} to $u_\e$ in $\Omega=B_r$.
 This gives us a function $u_0\in H^1(B_r)$ such that
 $\mathcal{L}_0 (u_0)=0$ in $B_r$ and
 $$
 \aligned
 \| u_\e -u_0\|_{L^\infty(B_{3/4})}
  &\le C \e \, \| u_\e\|_{H^1(\partial B_r)}\\
 &\le C \e\, \| u_\e\|_{L^2(B_{3/2})},
 \endaligned
 $$
 where we have used (\ref{2-32}) for the last step.
 We also obtain from (\ref{estimate-2.4}) that 
 \begin{equation}\label{app-300}
 \aligned
 \| u_0\|_{C^1(B_1)} 
 \le C \, \| u_\e\|_{L^2(\partial B_r)}
 \le C \, \| u_\e\|_{L^2(B_{3/2})},
 \endaligned
 \end{equation}
 where $C$ depends only on $d$, $\lambda$ and $M$.
 
 Suppose now that
 \begin{equation}\label{app-100}
 \fint_{B_{2}} |u_\e|^2 \le N \fint_{B_{\sqrt{\lambda}}} |u_\e|^2\, dx
 \end{equation}
 for some $N>1$.
It follows from (\ref{app-300}) that
\begin{equation}\label{app-101}
\left(\fint_{B_1} |u_0|^2\,dx\right)^{1/2}  \le C \left(\fint_{B_{3/2}} |u_\e|^2\, dx\right)^{1/2},
\end{equation}
and
$$
\aligned
\left(\fint_{B_{1/2}} |u_0|^2\, dx\right)^{1/2}
 & \ge \left(\fint_{B_{1/2}} |u_\e|^2\, dx\right)^{1/2}-\left(\fint_{B_{1/2}} |u_\e- u_0|^2\, dx\right)^{1/2}\\
 &\ge \left(\fint_{B_{1/2}} |u_\e|^2\, dx\right)^{1/2}
 -C \e \left(\fint_{B_{3/2}} |u_\e|^2\, dx\right)^{1/2}\\
 &\ge \left([C(N)]^{-1}-C \e \right)
  \left(\fint_{B_{3/2}} |u_\e|^2\, dx\right)^{1/2},
 \endaligned
 $$
 where the last step follows from Theorem \ref{main-theorem-2}.
 We should point out that the proof of Theorem \ref{main-theorem-2} in the next section
 does not use Theorem \ref{main-theorem-3}.
 Thus,  in view of (\ref{app-101}),  if $ C (N) \e <1/2$, the solution $u_0$ satisfies
 \begin{equation}\label{app-102}
 \fint_{B_1} |u_0|^2\, dx 
 \le C (N) \fint_{B_{1/2}} |u_0|^2\, dx,
 \end{equation}
 where $C(N)$ depends only on $d$, $\lambda$, $M$ and $N$.
 \end{proof}

 
\section{\bf Uniform doubling conditions}

Fix $\lambda\in (0, 1]$ and $M>0$.
Let $\mathcal{A}=\mathcal{A}(\lambda, M)$ denote the set of all $d\times d$ symmetric matrices $A=A(y)$ that satisfy the 
conditions (\ref{ellipticity}), (\ref{periodicity}) and (\ref{smoothness}).
For each $A\in \mathcal{A}$, we introduce a family of ellipsoids, 
\begin{equation}\label{E}
E_r(A)=\big\{ x\in \R^d: \ \langle (\widehat{A})^{-1}x,   x\rangle <r^2 \big\},
\end{equation}
where $\widehat{A}$ is the homogenized matrix defined by (\ref{gomo-c}).
Since $\widehat{A}$ satisfies (\ref{ellipticity}), we have
\begin{equation}\label{relation}
B(0, r{\sqrt{\lambda}}) \subset E_r (A)\subset B (0, r)
\end{equation}
for $0<r<\infty$.

The goal of this section is to prove the following.

\begin{thm}\label{d-thm}
Let $u_\e\in H^1(B_2)$ be a weak solution of $\text{\rm div}\big(A(x/\e)\nabla u_\e\big)=0$ in $B_2$ for some
$A\in \mathcal{A}$.
Suppose that 
\begin{equation}\label{dd}
\fint_{E_2(A)} | u_\e|^2\, dx \le N \fint_{E_1(A)} |u_\e|^2\, dx
\end{equation}
for some $N>1$.
Then for $0<r\le 1$,
\begin{equation}\label{d-condition}
\fint_{E_{r}(A)} |u_\e|^2\, dx \le C(N) \fint_{E_{r/2}(A)} |u_\e|^2\, dx,
\end{equation}
where $C(N)$ depends only on $d$, $\lambda$, $M$ and $N$.
\end{thm}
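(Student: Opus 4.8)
The plan is to bound the doubling index $\beta_\e(r):=\log_2\dfrac{\fint_{E_{2r}(A)}|u_\e|^2\,dx}{\fint_{E_r(A)}|u_\e|^2\,dx}$ uniformly for $0<r\le1$, which is (\ref{d-condition}), and to split the range of $r$ according to its size relative to $\e$. If $\e\ge\e_0$ for a threshold $\e_0=\e_0(N)$ fixed later, then $A(x/\e)$ is Lipschitz with norm $\le M/\e_0$, so the classical doubling theorem of \cite{G-Lin} for a single second-order operator with Lipschitz coefficients — whose constant depends only on $d$, $\lambda$, the Lipschitz norm and a doubling hypothesis imposed at one fixed scale — gives (\ref{d-condition}) at every scale $0<r\le1$, uniformly over $A\in\mathcal A$; the passage between the ellipsoidal averages in (\ref{dd}) and the ball averages of \cite{G-Lin} costs only factors $\lambda^{\pm d/2}$, by (\ref{relation}). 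At the opposite end, the range $0<r\le C_*(N)\e$ is reduced by the blow-up $v(y)=u_\e(\e y)$, which solves $\text{\rm div}(A(y)\nabla v)=0$ with $A$ genuinely Lipschitz of norm $M$, to the same theorem of \cite{G-Lin} applied to $v$: the doubling bound for $u_\e$ at the scale $\asymp C_*(N)\e$ supplied by the middle range furnishes the required doubling hypothesis for $v$ at a fixed scale. So the crux is the middle range $C_*(N)\e\le r\le1$ with $\e<\e_0$.

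For the middle range I would induct on dyadic scales $r=2^{-j}$, from $j=0$ down to the largest $j$ with $2^{-j}\ge C_*(N)\e$. The base cases, with $r$ in a fixed window $[\theta_0,1]$, follow from a homogenization–compactness argument: if (\ref{d-condition}) failed there along $\e_k\to0$, $A_k\in\mathcal A$ with $\widehat{A}_k\to\widehat{A}_\infty$, and $u_k$ normalized by $\fint_{E_2(A_k)}|u_k|^2=1$, then by the $H^1$ compactness of periodic homogenization one extracts $u_k\to u_\infty$ strongly in $L^2_{\mathrm{loc}}(B_2)$, with $u_\infty\not\equiv0$ a solution of $\text{\rm div}(\widehat{A}_\infty\nabla u_\infty)=0$ still satisfying (\ref{dd}); for a constant-coefficient solution the doubling bound at a fixed scale with constant depending only on $d,\lambda,N$ is classical, and passing to the limit — using unique continuation for $u_\infty$ to keep the denominators bounded below — contradicts the failure.

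The inductive step is the heart of the matter and is what prevents the constant from degenerating as $r\downarrow\e$. To bound $\beta_\e(r/2)$ one rescales $v(x)=u_\e(rx)$, turning the scale-$r$ picture into a scale-$1$ one for the $(\e/r)$-solution $v$ with $\e/r\le 1/C_*(N)$ small, and applies the approximation theorem — Theorem \ref{approx-thm-1}, equivalently the estimates (\ref{conv-rate-1})–(\ref{2-30}) of Theorem \ref{main-theorem-3}, whose proofs do not use the doubling estimate, so there is no circularity — to replace $v$ on $\sim B_{3/4}$ by a solution $U$ of the homogenized constant-coefficient equation with $\|v-U\|_{L^\infty(B_{3/4})}\le C(\e/r)\|v\|_{L^2(B_{3/2})}$. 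The decisive point is that this error, measured relative to the \emph{local} size of $u_\e$ near scale $r$ rather than to its global size, is $O((\e/r)^\gamma)$ for some $\gamma>0$: the inductive hypothesis that (\ref{d-condition}) already holds at all scales $\ge r$ is used to bound $\|v\|_{L^2(B_{3/2})}$ in terms of $\fint_{E_r(A)}|u_\e|^2$ up to a factor controlled by $N$. For the constant-coefficient $U$ the frequency function for $\mathcal L_0$ adapted to the ellipsoids $E_t(\widehat{A})$ is monotone — equivalently, $\log$ of the corresponding $\widehat{A}$-weighted $L^2$-average over $\partial E_t$ is an \emph{exactly} convex function of $\log t$ — so transferring this back through the approximation yields an approximate convexity of $\log$ of that average for $u_\e$, with additive error $\le C(\e/t)^\gamma$ at scale $t$. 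Telescoping from $\theta_0$ down to $t\asymp C_*(N)\e$, the accumulated error $\sum_{C_*(N)\e\le 2^{-j}\le\theta_0}C(\e 2^j)^\gamma$ is a geometric sum dominated by its last term, hence $O(1)$ once $C_*(N)$ is taken large; combined with the base-case bound this gives $\beta_\e(r)\le C(N)$ at every dyadic $r\in[C_*(N)\e,1]$, and passing from dyadic to all $r$ is routine.

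The main obstacle is precisely this control of error accumulation. A naive ``doubling at scale $r$ $\Rightarrow$ doubling at scale $r/2$ with a worse constant'' iteration would blow up over the $\sim\log(1/\e)$ steps needed to descend to scale $\e$. What makes the telescoping converge is, first, that the error inherited at each scale from homogenization is a genuine positive power of $\e/(\text{scale})$ — which is why the quantitative rate in Theorem \ref{approx-thm-1} and the local, rather than global, comparison are both essential — and, second, that the comparison object is \emph{exactly}, not merely approximately, log-convex in $\log t$, so that no scale-independent error $\log C_0$ ever enters the sum. The remaining work — the bookkeeping relating the averages $\fint_{E_r(A)}|\cdot|^2$ at neighbouring scales to the $\widehat{A}$-weighted boundary averages, and the $\lambda^{\pm d/2}$ conversions to the ball-based hypotheses of \cite{G-Lin} — is routine.
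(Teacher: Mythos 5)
Your overall architecture --- split into $\e \ge \e_0$ (classical Garofalo--Lin directly), a middle range $C_*(N)\e \le r \le 1$, and a small-scale range handled by blowing up to a unit-period operator and applying Garofalo--Lin again --- matches the paper's, as does the use of a compactness/homogenization argument to get a doubling bound at a fixed scale. But the way you bridge the middle range is genuinely different from the paper's, and the comparison is instructive.

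You propose a \emph{quantitative} descent: at each dyadic scale $t$, rescale, invoke Theorem~\ref{approx-thm-1} (which indeed does not depend on Theorem~\ref{main-theorem-2}, so there is no circularity in using it), transfer the exact log-convexity of the constant-coefficient comparison solution over ellipsoids $E_s(\widehat{A})$, and accept an additive error in the doubling index of order $(\e/t)^\gamma$ (here $\gamma=1$ from (\ref{conv-rate-1})), then telescope the geometric sum. This can be made to work, but it is delicate: the error you incur at scale $t$ is really $(\e/t)$ \emph{times a power of the doubling constant you are trying to establish} (because you must compare $\|v\|_{L^2(B_{3/2})}$ to $\fint_{E_{1/4}}|v|^2$ using the inductive hypothesis), so closing the induction requires choosing $C_*(N)$ to dominate that power of the putative constant, and one must then check the resulting self-consistency. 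You flag this as "bookkeeping," but it is precisely where the care is needed.

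The paper avoids all of this by a different, purely qualitative mechanism in Lemma~\ref{d-lemma}: fix an integer $m$ with $2^{2m-1}\le N\le 2^{2m+1}$ and show, by a single compactness-contradiction argument, that if the doubling ratio from $E_2$ to $E_1$ is at most $2^{2m+1}$, then the ratio from $E_1$ to $E_{1/2}$ is at most $2^{2m+1}$ \emph{with the same constant}, no loss at all. The contradiction in the limit comes not from a general three-spheres estimate but from the specific value $2^{2m+1}$: equality in the convexity of $\psi(r)=\log_2\fint_{B_{2^r}}|w|^2$ would force a linear $\psi$, hence a homogeneous leading term of degree $\ell$ with $2\ell=2m+1$, impossible for integer $\ell$. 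Because the constant is preserved exactly, the descent from scale $2^{-k+1}$ to $2^{-k}$ (by rescaling $v(x)=u_\e(x/2)$ and reapplying the lemma) accumulates zero error --- there is no telescoping sum to control, no $\gamma$, no approximation theorem needed in this step, and no risk that the inductive constant drifts. Your route buys a more quantitative, rate-based picture and makes the role of the $O(\e)$ convergence rate explicit; the paper's route buys simplicity and robustness by exploiting the "forbidden" doubling constants $2^{2m+1}$, at the cost of being nonconstructive (the $\e_0$ in Lemma~\ref{d-lemma} comes from compactness). Both are correct in spirit; if you pursue your version, the place to be careful is the self-consistent choice of $C_*(N)$ relative to the accumulated constant, which the paper's exact-propagation trick renders moot.
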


To prove Theorem \ref{d-thm}, we first note that
if $\e\ge \e_0>0$, then 
$$
|A(x/\e)-A(y/\e)|\le M\e^{-1} |x-y|\le M\e_0^{-1} |x-y|
$$
for any $x, y\in \R^d$.
As a result, the estimate (\ref{d-condition}) follows directly from \cite{G-Lin}.
In this case the periodicity of $A$ is not needed and the constant $C(N)$ in (\ref{d-condition}) depends on $\e_0$.
One may  also replace $E_r(A)$ by the ball $B_r$.

The proof for the case $0<\e<\e_0$ uses a compactness argument from the homogenization theory.

\begin{lemma}\label{d-lemma}
Let $m\ge 1$ be a positive integer.
Then there exists $\e_0>0$, depending only on $d$, $m$ and $\lambda$, such that
\begin{equation}\label{d-1}
\fint_{E_1(A)} |u_\e|^2\, dx \le 2^{2m+1} \fint_{E_{1/2}(A)} |u_\e|^2\, dx,
\end{equation}
whenever $0<\e\le \e_0$, $u_\e\in H^1(B_2)$ is a weak solution of
$\text{\rm div}(A(x/\e)\nabla u_\e)=0$ in $B_2$ for some symmetric matrix
$A$ satisfying (\ref{ellipticity}) and (\ref{periodicity}), and
\begin{equation}\label{d-2}
\fint_{E_{2}(A)} |u_\e|^2\, dx \le 2^{2m+1} \fint_{E_{1}(A)} |u_\e|^2\, dx.
\end{equation}
\end{lemma}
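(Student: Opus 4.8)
My plan is to argue by contradiction and to reduce, via a homogenization compactness argument, to the constant-coefficient case, in which the optimal doubling constant can be computed exactly through a spherical-harmonic expansion. Suppose the assertion failed for some fixed $m$. Taking $\e_0=1/n$, I would obtain $\e_n\to 0$, symmetric matrices $A_n$ obeying (\ref{ellipticity}) and (\ref{periodicity}), and solutions $u_n\in H^1(B_2)$ of $\text{\rm div}(A_n(x/\e_n)\nabla u_n)=0$ in $B_2$ with
\[
\fint_{E_2(A_n)}|u_n|^2\le 2^{2m+1}\fint_{E_1(A_n)}|u_n|^2,\qquad
\fint_{E_1(A_n)}|u_n|^2> 2^{2m+1}\fint_{E_{1/2}(A_n)}|u_n|^2 .
\]
Normalizing so that $\fint_{E_1(A_n)}|u_n|^2=1$, we get $\int_{E_2(A_n)}|u_n|^2\le 2^{2m+1}|B_2|$. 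Since the homogenized matrices $\widehat{A_n}$ lie in the compact set of symmetric matrices satisfying (\ref{ellipticity}), I pass to a subsequence with $\widehat{A_n}\to\bar A$, where $\bar A$ is symmetric and satisfies (\ref{ellipticity}) with the same $\lambda$; then $E_r(A_n)\to E_r(\bar A)$ for every $r$, both in the Hausdorff sense and in Lebesgue measure, and $\overline{E_{1/2}(\bar A)}\subset\overline{E_1(\bar A)}\subset E_2(\bar A)$ with compact inclusions.

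Next I would pass to the homogenized limit. On any open $\Omega$ with $\overline\Omega\subset E_2(\bar A)$ the functions $u_n$ are defined and, once $\Omega\subset E_2(A_n)$, uniformly bounded in $L^2(\Omega)$, hence — by Caccioppoli's inequality, using $\Omega\subset B_2$ — uniformly bounded in $H^1$ on slightly smaller sets; a diagonal extraction then yields $u_n\to u_0$ weakly in $H^1_{\rm loc}(E_2(\bar A))$, strongly in $L^2_{\rm loc}$, and locally uniformly (De Giorgi--Nash). To identify the equation for $u_0$ I use Tartar's oscillating test functions $P^k_n(x)=x_k+\e_n\chi^{(n)}_k(x/\e_n)$, where $\chi^{(n)}$ is the corrector for $A_n$: these satisfy $\text{\rm div}(A_n(x/\e_n)\nabla P^k_n)=0$ and, since $\|\nabla\chi^{(n)}\|_{L^2(\mathbb{T}^d)}\le\lambda^{-1}$ uniformly, $P^k_n\to x_k$ in $L^2_{\rm loc}$; moreover, using the elementary fact that $g_n(\cdot/\e_n)-\fint_{\mathbb{T}^d}g_n\to 0$ weakly in $L^2_{\rm loc}$ for any sequence bounded in $L^2(\mathbb{T}^d)$ — applied to $g_n=A_n(e_k+\nabla\chi^{(n)}_k)$, whose mean is $\widehat{A_n}e_k$ by (\ref{gomo-c}) — one gets $\nabla P^k_n\rightharpoonup e_k$ and $A_n(x/\e_n)\nabla P^k_n\rightharpoonup\bar A e_k$. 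The div--curl lemma, together with the symmetry of $A_n$, then gives $A_n(x/\e_n)\nabla u_n\rightharpoonup\bar A\nabla u_0$, so $\text{\rm div}(\bar A\nabla u_0)=0$ in $E_2(\bar A)$. Passing the three averages to the limit: locally uniform convergence and $|E_r(A_n)\triangle E_r(\bar A)|\to 0$ give $\fint_{E_1(\bar A)}|u_0|^2=1$ and $\fint_{E_{1/2}(\bar A)}|u_0|^2\le 2^{-2m-1}$, while exhausting $E_2(\bar A)$ by compacts and using weak-$L^2$ lower semicontinuity gives $u_0\in L^2(E_2(\bar A))$ with $\fint_{E_2(\bar A)}|u_0|^2\le 2^{2m+1}$.

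It then remains a constant-coefficient computation. Under the linear change of variables $x=\bar A^{1/2}y$, the function $v(y):=u_0(\bar A^{1/2}y)$ is harmonic on $B_2$ and lies in $L^2(B_2)$, each $E_r(\bar A)$ corresponds to $B_r$, and all averages are preserved, so $\fint_{B_2}|v|^2\le 2^{2m+1}$, $\fint_{B_1}|v|^2=1$, and $\fint_{B_{1/2}}|v|^2\le 2^{-2m-1}$. Expanding $v$ into homogeneous harmonic polynomials gives $\fint_{B_r}|v|^2=\sum_{k\ge 0}\alpha_k r^{2k}$ with $\alpha_k\ge 0$ and $\sum_k\alpha_k 4^k=\fint_{B_2}|v|^2<\infty$. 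Cauchy--Schwarz then yields
\[
1=\Big(\sum_k\alpha_k\Big)^2\le\Big(\sum_k\alpha_k 4^k\Big)\Big(\sum_k\alpha_k 4^{-k}\Big)\le 2^{2m+1}\cdot 2^{-2m-1}=1,
\]
so all inequalities are equalities; the equality case of Cauchy--Schwarz forces $\alpha_k$ to be supported at a single index $k_0$, which forces $\alpha_{k_0}=1$ and $\fint_{B_2}|v|^2=4^{k_0}$, while the equalities also force $\fint_{B_2}|v|^2=2^{2m+1}$. Since $2^{2m+1}=2\cdot 4^m$ is not a power of $4$, this is impossible, and the contradiction proves the lemma; note that this is exactly where the specific value $2^{2m+1}$ enters.

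I expect the homogenization step to be the main obstacle: because the $A_n$ vary with $n$ and are only bounded and measurable, one must verify that the limiting equation is governed by the limit of the homogenized matrices $\widehat{A_n}$ and not by some spurious weak-$\ast$ limit of $A_n$ — and this is exactly what Tartar's method supplies, provided one has the $n$-uniform $H^1(\mathbb{T}^d)$ bound on the correctors. A secondary point requiring care is that the hypothesis controls $u_n$ only on $E_2(A_n)$, which need not be compactly contained in $B_2$ (it meets $\partial B_2$ when $\widehat{A_n}$ has an eigenvalue equal to $1$), so no interior estimate is available near $\partial E_2(A_n)$; but only the one-sided bound $\fint_{E_2(\bar A)}|u_0|^2\le 2^{2m+1}$ on the limit is needed, and it is preserved under the weak convergence by lower semicontinuity.
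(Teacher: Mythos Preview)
Your proof is correct and follows the same overall strategy as the paper: argue by contradiction, use a homogenization compactness argument (with varying matrices $A_n$ and $\widehat{A_n}\to\bar A$) to pass to a solution of a constant-coefficient equation, change variables to a harmonic function on $B_2$, and then show that the doubling ratio $2^{2m+1}$ is unattainable for harmonic functions. Your treatment of the compactness step is in fact more explicit than the paper's, which simply invokes \cite{JKO-1994}; your use of Tartar's oscillating test functions, with the uniform $H^1(\mathbb{T}^d)$ bound on the correctors, is exactly what is needed to justify that the limiting equation has coefficient matrix $\lim\widehat{A_n}$.

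The only substantive difference is in the endgame. The paper uses the three-spheres theorem to deduce that $\psi(r)=\log_2\fint_{B_{2^r}}|w|^2$ is convex, observes that the limit inequalities force $\psi(0)=\tfrac12\psi(-1)+\tfrac12\psi(1)$, so $\psi$ is linear on $[-1,1]$, then invokes real-analyticity of $\psi$ to extend linearity to $(-\infty,1]$, and finally reads off the leading homogeneous degree $\ell$ as $r\to 0$ to get $2\ell=2m+1$. You instead expand $\fint_{B_r}|v|^2=\sum_k\alpha_k r^{2k}$ in homogeneous harmonics and apply Cauchy--Schwarz directly to the three averages; equality forces the expansion to be supported at a single degree $k_0$ with $4^{k_0}=2^{2m+1}$. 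The two arguments are equivalent in spirit (the three-spheres inequality \emph{is} Cauchy--Schwarz for the sequence $\alpha_k r^{2k}$), but your version is slightly more self-contained since it avoids the analytic-continuation step.
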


\begin{proof}
We prove the lemma by contradiction.
Suppose that there exist sequences $\{ \e_k\}\subset \R_+$, $\{ A_k \}$ satisfying (\ref{ellipticity}) and (\ref{periodicity}),
$\{ u_{k} \}\subset H^1(B_2)$, such that $\e_k \to 0$,
\begin{equation}\label{d-3}
\text{\rm div} \big( A_k (x/\e_k)\nabla u_{k}\big) =0 \quad \text{ in } B_2,
\end{equation}
\begin{equation}\label{d-4}
\fint_{E_{1}(A_k)} |u_k|^2\, dx > 2^{2m+1} \fint_{E_{1/2}(A_k)} |u_k|^2\, dx,
\end{equation}
and
\begin{equation}\label{d-5}
\fint_{E_{2}(A_k)} |u_k|^2\, dx \le 2^{2m+1} \fint_{E_{1}(A_k)} |u_k|^2\, dx,
\end{equation}
where $E_r(A_k)$ is defined by (\ref{E}).
Since $\big\{\widehat{A_k}\big\}$ is symmetric and bounded in $\R^{d\times d}$, we may assume that 
\begin{equation}\label{d-7}
\widehat{A_k}\to H
\end{equation}
for some symmetric matrix $H$ satisfying (\ref{ellipticity}).
By multiplying a constant to $u_k$, we may assume that
\begin{equation}\label{norm1}
\fint_{E_{2}(A_k)} |u_k|^2\, dx=1.
\end{equation}
By Caccioppoli's inequality this implies that $\{u_k\}$ is bounded in $H^1(E_r(H))$ for any $0<r<2$.
Thus, by passing to a subsequence, we may further assume that
\begin{equation}\label{d-6}
\aligned
u_k   & \to u & \quad & \text{ weakly in } H^1(E_r(H)),\\
A_k(x/\e_k)\nabla u_k  & \to F &\quad &\text{ weakly in } L^2(E_r(H))
\endaligned 
\end{equation}
for any $0<r<2$, where $u\in H_{\text{loc}}^1(E_2(H))$ and $F\in L_{\text{loc}}^2(E_2(H))$.
It follows from the theory of homogenization (see e.g. \cite{JKO-1994}) that $F=H\nabla u$ and 
\begin{equation}\label{d-8}
\text{\rm div} \big(H \nabla u) =0 \quad \text{ in } E_2(H).
\end{equation}

To proceed, we note that the weak convergence of $u_k$ in $H^1(E_r(H))$ implies 
$u_k \to u$ strongly in $L^2(E_r(H))$. In view of (\ref{d-4}), (\ref{d-5}), (\ref{d-7}) and (\ref{norm1}), 
by letting $k\to \infty$, we may deduce that
\begin{equation}\label{d-9}
\fint_{E_{1}(H)} |u|^2\, dx \ge 2^{2m+1} \fint_{E_{1/2}(H)} |u|^2\, dx,
\end{equation}
and
\begin{equation}\label{d-10}
\fint_{E_{2}(H)} |u|^2\, dx\le 1 \le 2^{2m+1} \fint_{E_{1}(H)} |u|^2\, dx.
\end{equation}
Since $H$  is symmetric and positive definite,
there exists a $d\times d$ matrix $S$ such that $SHS^T=I_{d\times d}$.
Let $u(x)=w(Sx)$.
Then
$$
\Delta w=\text{\rm div}\big(SHS^T\nabla w)=\text{\rm div} \big(H\nabla u\big)=0.
$$
Note that $H^{-1}=S^T S$ and 
$$
\langle H^{-1} x, x\rangle =|Sx|^2.
$$
By a change of variables it follows from (\ref{d-9}) and (\ref{d-10}) that
\begin{equation}\label{d-11}
\fint_{B_{1}} |w|^2\, dx \ge 2^{2m+1} \fint_{B_{1/2}} |w|^2\, dx,
\end{equation}
and
\begin{equation}\label{d-12}
\fint_{B_{2}} |w|^2\, dx \le 1 \le 2^{2m+1} \fint_{B_{1}} |w|^2\, dx.
\end{equation}

Next, we use the fact that for the harmonic function $w$ in $B_2$, 
the function 
$$
\psi (r)=\log_2 \left(\fint_{B_{2^r}} |w|^2\, dx \right)
$$
is a convex function of $r$ on the interval $(-\infty, 1]$ 
(this is a consequence of the well-known three-spheres theorem for harmonic functions).
It follows that
$$
\aligned
\fint_{B_{1}} |w|^2\, dx 
 & \le \left(\fint_{B_{1/2}} |w|^2\, dx \right)^{1/2}
 \left(\fint_{B_{2}} |w|^2\, dx \right)^{1/2}\\
 & \le 2^{m+\frac12} \left(\fint_{B_{1/2}} |w|^2\, dx \right)^{1/2}
 \left(\fint_{B_{1}} |w|^2\, dx \right)^{1/2},
 \endaligned
$$
where we have used (\ref{d-12}) for the last step.
This, together with (\ref{d-11}), yields 
\begin{equation}\label{d-13}
\fint_{B_{1}} |w|^2\, dx = 2^{2m+1} \fint_{B_{1/2}} |w|^2\, dx.
\end{equation}
Using (\ref{d-12}) and (\ref{d-13}), we obtain 
$$
\psi(0) =(1/2) \psi(-1) +(1/2)\psi(1).
$$
By the convexity of $\psi$, it follows that $\psi$ is a linear function on the
interval $[-1,1]$.
Since $\psi$ is analytic on $(-\infty, 1)$,
we may conclude that $\psi$ is a linear function on $(-\infty, 1]$.
It follows that 
\begin{equation}\label{d-14}
\frac{\fint_{B_{2r}} |w|^2\, dx}{\fint_{B_r} |w|^2\, dx }
=\frac{\fint_{B_{1/2}} |w|^2\, dx}{\fint_{B_{1/4}} |w|^2\, dx }
=2^{2m+1}
\end{equation}
for any $0<r\le 1$.

Finally, we write 
$$
w(x)=P_\ell (x) +R_\ell (x),
$$
where $P_\ell  (x)$ is a homogeneous polynomial of degree $\ell\ge 0$ and
the remainder $R_\ell (x)$ satisfies the estimate
$$
|R_k (x)|\le C_w |x|^{\ell+1} \quad \text{ for }  x\in B_{1}.
$$
It is not hard to see that as $ r\to 0$,
\begin{equation}\label{d-15}
\fint_{B_r} |w|^2 dx= r^{2\ell}\fint_{B_1} |P_\ell|^2\, dx
+ O(r^{2\ell +1}).
\end{equation}
This, together with (\ref{d-14}), implies that $2\ell=2m+1$,
which is in contradiction with the assumption that $m$ is an integer.
\end{proof}

\begin{proof}[\bf Proof of Theorem \ref{d-thm}]

Let $u_\e\in H^1(B_2)$ be a solution of
$\text{div}\big(A(x/\e)\nabla u_\e\big)=0$ in $B_2$ for some $A\in \mathcal{A}$.
Suppose that
\begin{equation}\label{d-40}
\fint_{E_2(A)} |u_\e|^2\, dx \le N \fint_{E_{1}(A)} |u_\e|^2\, dx,
\end{equation}
where $N>2$.
Let $m$ be an integer such that $2^{2m+1}\ge N\ge 2^{2m-1}$.
Let $\e_0>0$, which depends on $d$, $\lambda$ and $m$, be given by Lemma \ref{d-lemma}.
We may assume that $0<\e< \e_0$.
For otherwise the inequality (\ref{d-condition}) follows from \cite{G-Lin}, as we pointed out earlier.

It follows from (\ref{d-40}) by Lemma \ref{d-lemma} that
\begin{equation}\label{d-41}
\fint_{E_{1}(A)} |u_\e|^2\, dx \le 2^{2m+1} \fint_{E_{1/2}(A)} |u_\e|^2\, dx.
\end{equation}
Let $v(x)=u_\e (x/2)$.
Note that 
$
\mathcal{L}_{2\e} (v)=0  \text{ in } B_2,
$
By (\ref{d-41}) and a change of variables,
$$
\fint_{E_{2}(A)} |v|^2\, dx \le 2^{2m+1} \fint_{E_{1}(A)} |v|^2\, dx.
$$
Thus, if $2\e\le \e_0$, we may use Lemma \ref{d-lemma} again to obtain 
$$
\fint_{E_{1}(A)} |v|^2\, dx \le 2^{2m+1} \fint_{E_{1/2}(A)} |v|^2\, dx,
$$
which, by a change of variables, leads to 
$$
\fint_{E_{1/2}(A)} |u_\e|^2\, dx \le 2^{2m+1} \fint_{E_{1/4}(A)} |u_\e|^2\, dx.
$$
By an induction argument  we see that if $2^{k-1} \e\le \e_0$, 
\begin{equation}\label{d-45}
\fint_{E_{2^{-k+1}}(A)} |u_\e|^2\, dx \le 2^{2m+1} \fint_{E_{2^{-k}}(A)} |u_\e|^2\, dx.
\end{equation}
Suppose now that $(\e/\e_0)\le r\le 1$.
Let $k$ be an integer such that $2^{-k} \le r\le 2^{-k+1}$.
Then $(\e/\e_0)\le 2^{-k+1}$.
It follows from (\ref{d-45}) that
\begin{equation}\label{d-49}
\aligned
\fint_{E_r (A)} |u_\e|^2\, dx
&\le C \fint_{E_{2^{-k+1}}(A)} |u_\e|^2\, dx
\le C 2^{2m} \fint_{E_{2^{-k}}(A)} |u_\e|^2\, dx\\
& \le C2^{4m} \fint_{E_{2^{-k-1}}(A)} |u_\e|^2\, dx
\le C 2^{4m} \fint_{E_{r/2}(A)} |u_\e|^2\, dx,
\endaligned
\end{equation}
where $C$ depends only on $d$ and $\lambda$.

Finally, to deal with the case $0<r<(\e/\e_0)$, we use a blow-up argument.
Let $w(x)=u_\e (\e x/\e_0)$. Then
$
\mathcal{L}_{\e_0} (w)=0.
$
Note that by (\ref{d-49}) with $r=\e/\e_0$,
$$
\fint_{E_1(A)} |w|^2\, dx \le  C 2^{4m} \fint_{E_{1/2}(A)} |w|^2\, dx.
$$
It follows from \cite{G-Lin}  that for $0<r<1$,
$$
\fint_{E_r(A)} |w|^2\, dx \le {C} \fint_{E_{r/2}(A)} |w|^2\, dx,
$$
where ${C}$ depends only on $d$, $\lambda$, $M/\e_0$ and $m$.
By a change of variables this yields
\begin{equation}\label{d-50}
\fint_{E_r (A)} |u_\e|^2\, dx \le {C} \fint_{E_{r/2}(A)} |u_\e|^2\, dx,
\end{equation}
for any $0<r<\e/\e_0$.
In view of (\ref{d-49}) and (\ref{d-50}) we have proved that the inequality (\ref{d-50}) holds for any $0<r\le 1$,
where $C$ depends only on $d$, $\lambda$, $M$ and $N$.
\end{proof}

We now deduce Theorem  \ref{main-theorem-2} from Theorem \ref{d-thm}, using (\ref{relation}).

\begin{proof}[\bf Proof of Theorem \ref{main-theorem-2}]
By (\ref{relation}) the condition (\ref{double-1}) implies 
$$
\int_{E_2(A)} |u_\e|^2\, dx \le C N \fint_{E_1(A)} |u_\e|^2\, dx.
$$
It follows by Theorem \ref{d-thm} that if $0<r\le \sqrt{\lambda}$,
$$
\aligned
\fint_{B_r} |u_\e|^2\, dx 
&\le C \fint_{E_{r/\sqrt{\lambda}} (A)} |u_\e|^2\, dx
\le C(N) \fint_{E_{2^{-\ell }r/\sqrt{\lambda}}} |u_\e|^2\, dx\\
&\le C (N) \fint_{B_{r/2}} |u_\e|^2\, dx,
\endaligned
$$
where $\ell \ge 1$ is an integer such that $2^{-\ell}\le \sqrt{\lambda}/2$.
\end{proof}

The next theorem will be used in the proof of Theorem \ref{main-theorem-1}.

\begin{thm}\label{d-thm-1}
Let $u_\e$ be a weak solution of
$\text{\rm div}\big(A(x/\e)\nabla u_\e\big)=0$ in $B_2$ for some
$A\in \mathcal{A}$.
Suppose that
\begin{equation}\label{d-200}
\fint_{B_{2}} |u_\e|^2\, dx \le N \fint_{B_{\sqrt{\lambda}}} |u_\e|^2\, dx
\end{equation}
for some $N>1$.
Then  for  any $0<r<3/4$ and $|x_0|\le \sqrt{\lambda}/2$,
\begin{equation}\label{d-201}
\fint_{B(x_0, r)} |u_\e|^2\, dx \le C(N) \fint_{B(x_0, r/2)} |u_\e|^2\, dx,
\end{equation}
where $C(N)$ depends only on $d$, $\lambda$, $M$ and $N$.
\end{thm}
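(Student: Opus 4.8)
The plan is to deduce (\ref{d-201}) from the origin-centered doubling of Theorem \ref{main-theorem-2} by translating and rescaling the equation so that the ball $B(x_0,3/2)$ is turned into $B_2$. Fix $x_0$ with $|x_0|\le\sqrt\lambda/2$ and set
\[
v(x)=u_\e\big(\tfrac34 x+x_0\big),\qquad \tilde A(y)=A\big(y+x_0/\e\big).
\]
Since $\tfrac34|x|+|x_0|<\tfrac32+\tfrac{\sqrt\lambda}{2}\le 2$, we have $v\in H^1(B_2)$, and a direct computation shows that $v$ is a weak solution of $\text{\rm div}\big(\tilde A(x/\e_1)\nabla v\big)=0$ in $B_2$ with $\e_1=\tfrac43\e$. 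The matrix $\tilde A$ is symmetric and satisfies (\ref{ellipticity}), (\ref{periodicity}) and (\ref{smoothness}) with the same $\lambda$ and $M$ (a fixed translation of the coefficient field affects none of these), hence $\tilde A\in\mathcal A(\lambda,M)$; in fact $\widehat{\tilde A}=\widehat A$, though that is not needed here. The change of variables $y=\tfrac34 x+x_0$ gives the exact identity
\[
\fint_{B_r}|v|^2\,dx=\fint_{B(x_0,\,3r/4)}|u_\e|^2\,dx\qquad (r>0),
\]
so establishing the doubling inequality $\fint_{B_r}|v|^2\le C(N)\fint_{B_{r/2}}|v|^2$ for $0<r<1$ is, via this identity, exactly (\ref{d-201}) for $0<\rho:=3r/4<3/4$.

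To get this doubling inequality for $v$ I would apply Theorem \ref{main-theorem-2} directly to $v$; everything then comes down to verifying its hypothesis (\ref{double-1}) for $v$, i.e. $\fint_{B_2}|v|^2\le N'\fint_{B_{\sqrt\lambda}}|v|^2$ for some $N'$ depending only on $d,\lambda,M,N$. By the identity above this says
\[
\fint_{B(x_0,3/2)}|u_\e|^2\,dx\ \le\ N'\fint_{B(x_0,\,3\sqrt\lambda/4)}|u_\e|^2\,dx.
\]
For the numerator, $B(x_0,3/2)\subset B_2$ and the two balls have comparable volume, so $\fint_{B(x_0,3/2)}|u_\e|^2\le C(d)\fint_{B_2}|u_\e|^2$; the assumption (\ref{d-200}) and the origin-centered doubling of Theorem \ref{main-theorem-2} (iterated a bounded number of times) then give $\fint_{B_2}|u_\e|^2\le C(N)\fint_{B_{\sqrt\lambda/4}}|u_\e|^2$. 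For the denominator, the hypothesis $|x_0|\le\sqrt\lambda/2$ is used precisely to guarantee $B_{\sqrt\lambda/4}\subset B(x_0,3\sqrt\lambda/4)$, whence $\fint_{B_{\sqrt\lambda/4}}|u_\e|^2\le C(d)\fint_{B(x_0,3\sqrt\lambda/4)}|u_\e|^2$. Chaining these three estimates yields (\ref{double-1}) for $v$ with $N'=C(N)$, and Theorem \ref{main-theorem-2} applied to $v$ finishes the proof.

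I expect the real work to be bookkeeping rather than a new idea: one has to check that every ball above lies inside the domain of the relevant solution and that all the volume-comparison constants and the number of doubling iterations depend only on $d$, $\lambda$, $M$, $N$, uniformly in $\e$ and in $x_0$; the point of the constraint $|x_0|\le\sqrt\lambda/2$ is exactly to keep $B(x_0,3\sqrt\lambda/4)$ large enough to contain a fixed ball centered at the origin, which is what lets the origin doubling be carried over to the center $x_0$. A minor endpoint case is $\lambda=1$: since Theorem \ref{main-theorem-2} is stated for radii $r<1$, the step downward from radius $\sqrt\lambda=1$ needs the ellipsoid doubling of Theorem \ref{d-thm} at $r=1$ (whose hypothesis reduces to (\ref{d-200}) because $E_r(A)=B_r$ when $\widehat A=I$), or, alternatively, one simply observes that $\lambda=1$ forces $A\equiv I$ and $u_\e$ harmonic.
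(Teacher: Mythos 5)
Your proof is correct and follows essentially the same route as the paper: translate by $x_0$ and rescale by the factor $3/4$ so that the translated coefficient $\widetilde{A}(y)=A(y+x_0/\e)$ is again in $\mathcal{A}(\lambda,M)$, verify the hypothesis (\ref{double-1}) for the rescaled solution $v$ by chaining (\ref{d-200}) with a few iterations of Theorem \ref{main-theorem-2} and elementary ball inclusions (using $|x_0|\le\sqrt\lambda/2$ to place a fixed origin-centered ball inside $B(x_0,3\sqrt\lambda/4)$), then apply Theorem \ref{main-theorem-2} to $v$ and undo the change of variables. The only cosmetic difference is your choice of intermediate radius $\sqrt\lambda/4$ versus the paper's $\sqrt\lambda/8$, and your explicit remark on the endpoint $\lambda=1$, which the paper passes over silently.
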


\begin{proof}
The case $x_0=0$ is contained in Theorem \ref{main-theorem-2}.
To handle the general case where $|x_0|\le \sqrt{\lambda}/4$, we use the fact that
$\mathcal{A}$ is translation invariant.
Let
$
v(x)=u_\e (x_0 +tx).
$
Then 
$$
\text{\rm div} \big(\widetilde{A}(x/(\e t^{-1}))\nabla v\big)=0 \quad \text{ in } B_2,
$$
where  $\widetilde{A}(y)= A(y + x_0/\e)$.
Observe that $\widetilde{A}\in \mathcal{A}$ and that if $t=3/4$ and $|x_0|\le \sqrt{\lambda}/2$,
$B(0, c_0)\subset B(x_0, t \sqrt{\lambda})$ for $c_0=\sqrt{\lambda}/8$. Hence,
$$
\aligned
\fint_{B(0,2)} |v|^2\, dx
&=\fint_{B(x_0, 2t)} |u_\e|^2\, dx
\le C \fint_{B(0,2)} |u_\e|^2\, dx\\
&\le C(N)   \fint_{B(0, c_0) } |u_\e|^2\, dx
\le C(N)\fint_{B(x_0, t\sqrt{\lambda})} |u_\e|^2\, dx\\
&  = C(N) \fint_{B(0, \sqrt{\lambda})} |v|^2\, dx,
\endaligned
$$
where we have used Theorem \ref{main-theorem-2}.
It follows by Theoem \ref{main-theorem-2} that for $0<r<2$,
$$
\fint_{B(0, r)} |v|^2\, dx \le C \fint_{B(0, r/2)} |v|^2\, dx,
$$
which gives (\ref{d-201}).
\end{proof}


\section{\bf Uniform measure estimates of nodal sets}

Throughout this section 
$u_\e$ is a nonzero weak solution of $\text{\rm div}\big(A(x/\e)\nabla u_\e)=0$ in
$B_{2}$ for some $A\in \mathcal{A}(\lambda, M)$.
In view of Theorem \ref{d-thm-1} we assume that $u_\e$ satisfies the doubling condition,
\begin{equation}\label{ud-100}
\fint_{B(y, r)} |u_\e|^2\, dx
\le \widetilde{N} \fint_{B(y, r/2)} |u_\e|^2\, dx
\end{equation}
for any $|y|\le \sqrt{\lambda}/2$ and $0<r<3/4$, where $\widetilde{N}>1$.
Without the loss of generality we further assume that
\begin{equation}\label{m-0}
\int_{B_{2}} |u_\e|^2\, dx =1.
\end{equation}
Let
\begin{equation}\label{z-set}
Z(u_\e) =\big\{ x\in B_2: \ u_\e (x) =0\big\}
\end{equation}
denote the nodal set of $u_\e$.
Define
\begin{equation}\label{F}
F_\e (y, r)=\frac{\mathcal{H}^{d-1}\big(Z(u_\e)\cap B(y, r)\big)}{ r^{d-1}},
\end{equation}
where $B(y, r)\subset B(0,2)$.
Our goal is to prove that
\begin{equation}\label{nd-1}
F_\e (0, \sqrt{\lambda}/4) \le C(\widetilde{N}),
\end{equation}
where $C(\widetilde{N})$ depends only on $d$, $\lambda$, $M$ and $\widetilde{N}$.

We first recall one of the main results in \cite{Han-Lin-2}.

\begin{thm}\label{cd-thm}
Let $u$ be a nonzero weak solution of $\text{\rm div}\big( A(x)\nabla u)=0$ in $B_1$
for some symmetric matrix $A$ satisfying  conditions (\ref{ellipticity}) and (\ref{smoothness}).
Suppose that
\begin{equation}\label{c-d}
\fint_{B_1} |u|^2\, dx \le {N} \fint_{B_{1/2}} |u|^2\, dx
\end{equation}
for some ${N}>1$. Then 
\begin{equation}\label{cd-1}
\mathcal{H}^{d-1} \big( Z(u)\cap B_{1/2}\big)
\le C({N}),
\end{equation}
where $C({N})$ depends only on $d$, $\lambda$, $M$ and $\widetilde{N}$.
\end{thm}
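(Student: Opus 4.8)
The plan is to run the iterating-rescaling scheme of Hardt--Simon, Lin and Han--Lin: first convert the one-scale hypothesis (\ref{c-d}) into a frequency (equivalently doubling) bound that is uniform in the base point and the scale; then, by a blow-up, recognize the local model of $u$ at each nodal point as a homogeneous harmonic polynomial of controlled degree; and finally estimate $\mathcal{H}^{d-1}(Z(u)\cap B_{1/2})$ by an induction along the strata of the nodal set, ordered by the dimension of the symmetry cone of the model. \emph{Step 1 (uniform doubling).} Since $A$ is symmetric and Lipschitz, the Garofalo--Lin monotonicity formula \cite{G-Lin} applies to the frequency function
$$
N_u(x_0,r)=\frac{r\int_{B(x_0,r)}\langle A\nabla u,\nabla u\rangle\,dx}{\int_{\partial B(x_0,r)}\mu\,u^{2}\,d\sigma},
$$
and the hypothesis (\ref{c-d}) propagates to $N_u(x_0,r)\le\Lambda$ together with $\fint_{B(x_0,2r)}|u|^{2}\le C(N)\fint_{B(x_0,r)}|u|^{2}$ for all $x_0\in B_{3/4}$ and $0<r<r_0$, where $\Lambda$, $r_0$ and $C(N)$ depend only on $d,\lambda,M,N$. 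In particular $u$ has the strong unique continuation property and vanishes to a finite integer order $\ell(x_0)\le\Lambda$ at each $x_0\in Z(u)$, and Schauder theory gives $\|u\|_{C^{1,\alpha}(B_{3/4})}\le C\|u\|_{L^{2}(B_1)}$, so, after normalizing $\fint_{B_1}|u|^{2}=1$, the rescalings $u_{x_0,r}(y)=u(x_0+ry)\big/\big(\fint_{B(x_0,r)}|u|^{2}\big)^{1/2}$ form a family that is precompact in $C^1_{\mathrm{loc}}$.

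\emph{Step 2 (local model).} Fix $x_0\in Z(u)$ and set $\ell=\ell(x_0)$. Any $C^1_{\mathrm{loc}}$ limit of $u_{x_0,r}$ as $r\to0$ solves the frozen constant-coefficient equation $\operatorname{div}(A(x_0)\nabla v)=0$ and, by the monotonicity formula, is homogeneous of degree $\ell$; hence it is a nonzero polynomial $P_{x_0}$ which, after the fixed linear change of variables diagonalizing $A(x_0)$, is a genuine homogeneous harmonic polynomial of degree $\ell$. Two facts about the model will be used: $\mathcal{H}^{d-1}\big(Z(P_{x_0})\cap B_1\big)\le C(d)\,\ell\le C(d)\,\Lambda$ (a classical integral-geometry bound, since a generic line meets $Z(P_{x_0})$ in at most $\ell$ points), and $Z(P_{x_0})$ is a cone whose spine --- the largest subspace of translational symmetry of $P_{x_0}$ --- has dimension $s(x_0)\le d-2$, with equality to $d-1$ only when $\ell=1$, in which case $x_0$ is a regular point of $Z(u)$. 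One also needs a quantitative form of this convergence: for each $\delta>0$ there is $\rho=\rho(\delta,d,\lambda,M,\Lambda)>0$ so that $u_{x_0,\rho}$ is within $\delta$ in $C^1(B_{1/2})$ of some such model normalized by $\fint_{B_1}|P_{x_0}|^{2}=1$.

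\emph{Step 3 (stratified induction).} Stratify $Z(u)=\bigcup_{j=0}^{d-1}S_j$ with $S_j=\{x_0\in Z(u):s(x_0)=j\}$; a dimension-reduction argument in the spirit of Almgren and Federer, fed by the blow-up of Step 2, shows that near each of its points $S_j$ is contained in a $C^1$ submanifold of dimension $j$, and that $S_{d-1}$ is precisely the regular set $\{u=0,\ \nabla u\ne0\}$. I would prove by induction on $j=0,1,\dots,d-1$ that $\mathcal{H}^{j}(S_j\cap B_{3/4})\le C(N)$. The base case $j=0$ is a finite set whose cardinality in $B_{3/4}$ is bounded by $C(N)$, since a solution of frequency $\le\Lambda$ carries only boundedly much ``$0$-symmetric'' nodal structure. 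For the inductive step, around $x_0\in S_j$ let $r(x_0)$ be the largest scale down to which $u_{x_0,r}$ stays $\delta_0$-close in $C^1(B_{1/2})$ to its $j$-symmetric model, $\delta_0$ a fixed small constant from Step 2; on $B(x_0,r(x_0))$ the set $S_j$ is a $C^1$ graph with controlled constant and carries $\mathcal{H}^{j}$ measure $\le C\,r(x_0)^{j}$ there. The points of $S_j\cap B_{3/4}$ at which $r(x_0)<t$ must lie within distance $\le Ct$ of $\bigcup_{i<j}S_i$ --- a ball of radius $\gg r(x_0)$ about such an $x_0$ necessarily contains a nodal point of strictly smaller symmetry dimension, by the compactness of Step 2 --- so, covering that tube by $\le C\,t^{-(j-1)}$ balls of radius $t$ and using the inductive bound on $\mathcal{H}^{j-1}\big(\bigcup_{i<j}S_i\cap B_{3/4}\big)$, its $\mathcal{H}^{j}$ contribution is $\le C(N)\,t\to0$. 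A Besicovitch covering then yields $\mathcal{H}^{j}(S_j\cap B_{3/4})\le C(N)$. Taking $j=d-1$ (the lower strata contribute nothing to $\mathcal{H}^{d-1}$) and covering $B_{1/2}$ by finitely many balls gives (\ref{cd-1}).

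\emph{Main obstacle.} The delicate point is the inductive step of Step 3: one must show \emph{quantitatively}, using the uniform approximation by harmonic polynomials from Step 2 together with the frequency bound, that the good scale $r(x_0)$ can degenerate only as $x_0$ approaches a lower stratum, so that the part of $S_j$ not covered by good graphs is trapped in a tube about $\bigcup_{i<j}S_i$ whose $j$-dimensional content is controlled by the Hausdorff estimate already proved one dimension down. This is exactly the place where the dependence of all constants on $d,\lambda,M,N$ alone --- and not on any higher regularity of $A$ --- has to be secured.
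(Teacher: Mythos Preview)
The paper does not prove this statement: Theorem~\ref{cd-thm} is quoted verbatim as ``one of the main results in \cite{Han-Lin-2}'' and used as a black box (in Lemma~\ref{s-lemma}, to handle the small-scale regime $r\le C\e$). So there is no proof in the paper to compare your proposal against; the right benchmark is the original Han--Lin argument.

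Your outline is in the correct tradition and Steps~1 and~2 are accurate summaries of the standard machinery (Garofalo--Lin frequency, blow-up to a homogeneous harmonic polynomial of degree $\le\Lambda$). Step~3, however, is not the route Han--Lin actually take. Their scheme is a two-stratum iteration rather than a full stratified induction over $j=0,\dots,d-1$: one covers $\{|u|<\delta\}\cap B_{1/2}$ by balls, splits them into ``good'' balls (where $u$ is $C^1$-close to a nonzero linear function, so $Z(u)$ is a Lipschitz graph with controlled $\mathcal H^{d-1}$ measure) and ``bad'' balls (where $|u|+|\nabla u|$ is small), and uses the $(d-2)$-dimensional bound on the singular set $\{u=|\nabla u|=0\}$ to force the total $(d-1)$-content of the bad balls to contract by a fixed factor at each step. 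This is exactly the mechanism the present paper reproduces at the homogenized level in Lemmas~\ref{c-lemma} and~\ref{m-lemma}; reading those two lemmas is probably the fastest way to see what the Han--Lin iteration looks like in practice.

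Your Step~3, as written, has two soft spots. First, the claim that ``near each of its points $S_j$ is contained in a $C^1$ submanifold of dimension $j$'' is not known for $j\le d-2$ under merely Lipschitz coefficients; the $C^{1}$ structure theorems for lower strata require smoother $A$. Your covering argument does not actually need this regularity --- Hausdorff-measure bounds on the strata suffice --- so this is repairable, but as stated it over-claims. Second, the heart of your induction is the assertion that if the good scale $r(x_0)$ is small then $x_0$ lies within $Ct$ of $\bigcup_{i<j}S_i$. This is the quantitative $\varepsilon$-regularity/cone-splitting step, and it is precisely what one has to \emph{prove}; you correctly flag it as the main obstacle, but a self-contained argument cannot leave it at the level of a flag. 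If you want to carry your stratified induction through rather than follow Han--Lin's two-set iteration, you would need to supply that lemma explicitly.
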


If $\e\ge \e_0>0$, the estimate (\ref{nd-1}) follows directly from Theorem \ref{cd-thm}, with constant $C(\widetilde{N})$ 
also depending on $\e_0$.
We may also use Theorem \ref{cd-thm}  to obtain 
a small-scale estimate by a rescaling argument.
The estimate allows us to bound the function $F_\e (y, r)$ for $0<r<C\e$.

\begin{lemma}\label{s-lemma}
Suppose that $0<r<k\e $ for some $k\ge 1$ and $r<(3/8)$. Then for any  $|x_0|\le \sqrt{\lambda}/2$,
\begin{equation}\label{u-1}
\mathcal{H}^{d-1} \big( Z(u_\e)\cap B(x_0, r) \big)
\le C(\widetilde{N}, k) r^{d-1},
\end{equation}
where $C(\widetilde{N}, k)$ depends only on $d$, $\lambda$, $k$, $M$ and $\widetilde{N}$.
\end{lemma}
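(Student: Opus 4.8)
The plan is to reduce the small-scale statement to the fixed-scale result for $\mathcal{L}_{\e_0}$-type operators (Theorem \ref{cd-thm}) by rescaling, using the uniform doubling condition (\ref{ud-100}) at intermediate scales to supply the hypothesis of that theorem. First I would dispose of the trivial range: if $r \ge c\,\e$ for a fixed small constant $c$ depending on $k$, then $\e \le C(k) r$ and rescaling $u_\e$ by the factor $r$ produces a solution of a divergence-form equation with a matrix $A(\,\cdot\,/(\e/r))$ whose Lipschitz constant $M r/\e \le M/c$ is bounded in terms of $k$ and $M$; applying Theorem \ref{cd-thm} on the rescaled ball (with the doubling constant furnished by (\ref{ud-100})) gives (\ref{u-1}). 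So the real content is the range $r < c\,\e$.

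For $r < c\,\e$, set $w(x) = u_\e(x_0 + \e x)$, which satisfies $\operatorname{div}(A(x_0/\e + x)\nabla w) = 0$, i.e. $\mathcal{L}_1$-type equation with the translated matrix $\widetilde A(y) = A(y + x_0/\e) \in \mathcal{A}(\lambda, M)$, on a ball of radius $\ge c'/\e \gg 1$ about the origin. The nodal set of $u_\e$ in $B(x_0, r)$ corresponds under this dilation to the nodal set of $w$ in $B(0, r/\e)$ with $r/\e < c < 1$. The key point is that $w$ now solves a fixed-scale equation (no oscillation left), so Theorem \ref{cd-thm}, together with its scale-invariant corollary $\mathcal{H}^{d-1}(Z(w)\cap B(z,\rho)) \le C(N)\rho^{d-1}$ for small balls $B(z,\rho)$ contained in the region where the relevant doubling holds, applies. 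To invoke it I need a doubling inequality for $w$ at scale $\sim 1$ around the origin: this is exactly what (\ref{ud-100}) provides after undoing the scaling, since $|x_0| \le \sqrt{\lambda}/2$ and the balls $B(x_0, \rho)$ with $\rho \lesssim \e$ lie well inside $B(0, 3/4)$ where (\ref{ud-100}) is valid; thus $\fint_{B(x_0,\e)}|u_\e|^2 \le \widetilde N^{\,j}\fint_{B(x_0, 2^{-j}\e)}|u_\e|^2$ for all $j$, which translates to a fixed doubling constant for $w$ on $B(0,1)$ down to all smaller scales. Feeding this into Theorem \ref{cd-thm} (applied on $B(0,1)$, or on a suitable small ball and rescaled) yields $\mathcal{H}^{d-1}(Z(w)\cap B(0, r/\e)) \le C(\widetilde N)(r/\e)^{d-1}$, and scaling back by $\e^{d-1}$ gives (\ref{u-1}) with a constant depending only on $d$, $\lambda$, $M$, $\widetilde N$ (and, in the first range, on $k$).

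The main obstacle I anticipate is bookkeeping the rescaling so that Theorem \ref{cd-thm}, which is stated only for the fixed pair of balls $B_{1/2} \subset B_1$, delivers an estimate on $F_\e(x_0, r)$ uniformly for \emph{all} small $r$ — this requires first upgrading Theorem \ref{cd-thm} to a genuinely scale-invariant form (any ball $B(z,\rho)$ on which the doubling hypothesis holds), which is immediate by dilation and the translation invariance of $\mathcal{A}(\lambda,M)$, but must be stated carefully. A secondary point is checking that when $c\,\e \le r < k\e$ the Lipschitz constant of the rescaled coefficient matrix, namely $M r/\e$, stays bounded by a constant depending only on $k$ and $M$ — which is clear since $r/\e < k$ — so that the hypotheses of Theorem \ref{cd-thm} are met with constants of the claimed dependence.
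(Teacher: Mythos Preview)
Your approach is correct and is at bottom the same as the paper's: rescale so that the coefficient matrix has bounded Lipschitz constant, verify the doubling hypothesis via (\ref{ud-100}), and invoke Theorem~\ref{cd-thm}. But you have made it more complicated than necessary, and one inequality in your first paragraph points the wrong way.

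The paper does it in one step with no case split. Set $v(x)=u_\e(x_0+2rx)$; then $\operatorname{div}\big(\widetilde A(x/(\e/2r))\nabla v\big)=0$ in $B_1$ with $\widetilde A(y)=A(y+x_0/\e)$. The single hypothesis $r<k\e$ already gives $\e/(2r)>1/(2k)$, so the Lipschitz constant of $x\mapsto\widetilde A(2rx/\e)$ is at most $2Mk$ --- bounded for \emph{every} $r$ in the stated range, not just for $r\ge c\e$. The doubling condition (\ref{ud-100}) at radii $2r$ and $r$ yields $\fint_{B_1}|v|^2\le\widetilde N\fint_{B_{1/2}}|v|^2$, and Theorem~\ref{cd-thm} applied once finishes the proof after changing variables back. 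Your ``case 2'' (rescale by $\e$, then worry about a further rescaling by $r/\e$ to make Theorem~\ref{cd-thm} scale-invariant) simply composes to this same rescaling by $2r$; there is no separate ``hard range''.

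One slip to fix: in your trivial range you write ``$Mr/\e\le M/c$'', but $r\ge c\e$ gives a \emph{lower} bound $r/\e\ge c$, not an upper bound. The upper bound you actually need, $r/\e<k$, comes directly from the lemma's hypothesis and is independent of your auxiliary constant $c$ --- which is another way of seeing that the case split is unnecessary.
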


\begin{proof}
Let $v(x)=u_\e (x_0 + 2rx)$. Then
$$
\text{\rm div}\big( \widetilde{A}(x/(\e/2r))\nabla v\big) =0 \quad \text{ in } B_1,
$$
where $\widetilde{A}(y)=A(y+x_0/\e)$.
Since $\e/(2r)\ge 1/(2k)$ and
$$
\aligned
\fint_{B_1} |v|^2\, dx
 & =\fint_{B(x_0, 2r)} |u_\e|^2\, dx\\
 & \le\wN \fint_{B(x_0, r)} |u_\e|^2\, dx 
 =\widetilde{N} \fint_{B_{1/2}} |v|^2\, dx,
 \endaligned
 $$
 where we have used (\ref{ud-100}), it follows by Theorem \ref{cd-thm}  that
$$
\mathcal{H}^{d-1}
\big( Z(v)\cap B_{1/2}\big)
\le C(\widetilde{N}, k),
$$
where $C(\widetilde{N}, k)$ depends only on $d$, $\lambda$,  $k$, $M$ and $\widetilde{N}$.
By a change of variables this yields (\ref{u-1}).
\end{proof}

For $N>1$, define
\begin{equation}
\aligned
\mathcal{F} (N)
=\bigg\{ u\in H^1(B_1): \  & \text{\rm div}\big(A\nabla u\big)=0 \text{ in } B_1 \text{ for some constant matrix } \\
& \ A\in \mathcal{A}(\lambda, M)
\text{ and } 
1=\fint_{B_1} |u|^2\, dx \le N \fint_{1/2} |u|^2\, dx \bigg\}.
\endaligned
\end{equation}
Let 
\begin{equation}\label{s-set}
S(u)=\big\{ x\in B_1: \ u(x)=|\nabla u(x)|=0 \big\}
\end{equation}
denote the singular set of $u$.
It was proved in \cite{Han-Lin-H-1998} that if $u\in \mathcal{F}(N)$,
\begin{equation}\label{s-estimate}
\mathcal{H}^{d-2} \big( S(u)\cap B(0, r) \big)\le C(N) r^{d-2}
\end{equation}
for any $0<r\le 1/2$, where $C(N)$ depends only on $d$, $\lambda$ and $N$.

\begin{lemma}\label{c-lemma}
Let $r_0=\sqrt{\lambda}/4$.
Then there exist $r_1 \in (0, r_0/4)$ and $\delta_1>0$, depending only on $d$, $\lambda$ and $N$, 
such that for each $u\in \mathcal{F}(N)$, there  exist
 two finite sequences of balls 
$$
\big\{B(x_i, t_i ), i=1,2, \dots, m \big\} \quad
\text{ and  } \quad
\big\{ B(y_j, s_j), j=1,2, \dots, m^\prime \big\},
$$
with the properties  that $x_i, y_j \in B(0, r_0)$, $0<t_i, s_j <r_1$,
\begin{equation}\label{c-1}
\Big\{ x\in B(0, r_0): \ |u(x)|< \delta_1 \Big\}
\subset
\bigcup_{i=1}^m B(x_i, t_i)
\end{equation}
\begin{equation}\label{c-1.5}
\Big\{ x\in B(0, r_0): \ |u(x)|+|\nabla u(x)|<\delta_1 \Big\} \subset
 \bigcup_{j=1}^{m^\prime} B(y_j, s_j),
\end{equation}
\begin{equation}\label{c-2}
 \sum_{i=1}^m t_i ^{d-1}  \le C(N) \quad \text{ and } \quad
\sum_{j=1}^{m^\prime} s_j^{d-1}  <\frac14  r_0^{d-1},
\end{equation}
where  $C(N)$ depends only on $d$, $\lambda$ and $N$.
\end{lemma}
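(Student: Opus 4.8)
The plan is to reduce the statement to two facts already available for $\mathcal{F}(N)$ — the nodal bound $\mathcal{H}^{d-1}(Z(u)\cap B_{1/2})\le C(N)$ from Theorem \ref{cd-thm} and the singular bound \eqref{s-estimate} — together with compactness of $\mathcal{F}(N)$, and then to extract the balls by a Hausdorff–measure covering at a small scale. Since $r_0=\sqrt\lambda/4<1/4$, the closed ball $\overline{B(0,r_0)}$ sits compactly inside $B_{1/2}$; each $u\in\mathcal{F}(N)$ solves a constant-coefficient $\lambda$-elliptic equation with $\fint_{B_1}|u|^2=1$, so interior estimates bound $\|u\|_{C^2(\overline{B(0,r_0)})}$ in terms of $d,\lambda$ only, and the constant matrices in $\mathcal{A}(\lambda,M)$ form a compact set; hence $\mathcal{F}(N)$ is precompact in $C^1(\overline{B(0,r_0)})$. (To make it actually compact one passes to the $C^1_{\mathrm{loc}}$-closed enlargement in which $\fint_{B_1}|u|^2$ is only constrained to lie in $[2^{-d}N^{-1},1]$; this still excludes $u\equiv0$ uniformly, contains $\mathcal{F}(N)$, and both cited estimates survive after rescaling $u$ to unit average. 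I will suppress this point below.)

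First I fix the scales and build the coverings for an individual $u$. By the definition of $\mathcal{H}^{d-1}$ and the bound $\mathcal{H}^{d-1}(Z(u)\cap B(0,r_0+\varepsilon))\le C(N)$ (Theorem \ref{cd-thm}, with $r_0+\varepsilon<1/2$), for every small $\tau>0$ there is a cover of the compact set $Z(u)\cap\overline{B(0,r_0)}$ by balls of radius $\le\tau$ with $\sum(\text{radii})^{d-1}\le C(N)$. By \eqref{s-estimate}, $\mathcal{H}^{d-2}(S(u)\cap B(0,r_0+\varepsilon))\le C(N)$, so for every small $\rho>0$ there is a cover of $S(u)\cap\overline{B(0,r_0)}$ by balls of radius $\le\rho$ with $\sum(\text{radii})^{d-2}\le C(N)$, hence $\sum(\text{radii})^{d-1}\le C(N)\rho$. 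Because $d-1>d-2$, the exponent gap lets us force the second sum to be $<\tfrac14 r_0^{d-1}$ by taking $\rho$ small depending only on $d,\lambda,N$; fixing then $\tau=\rho$ small enough, and translating the centers into $B(0,r_0)$ at the cost of enlarging radii by a bounded factor, we obtain for each $u$ the families $\{B(x_i,t_i)\}$ covering $Z(u)\cap\overline{B(0,r_0)}$ and $\{B(y_j,s_j)\}$ covering $S(u)\cap\overline{B(0,r_0)}$, with $x_i,y_j\in B(0,r_0)$, all radii $<r_1$ for a suitable $r_1\in(0,r_0/4)$, $\sum t_i^{d-1}\le C(N)$ and $\sum s_j^{d-1}<\tfrac14 r_0^{d-1}$. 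Let $V_u,W_u$ be the corresponding open unions.

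Next I produce $\delta_1$ for this $u$: the compact set $\overline{B(0,r_0)}\setminus V_u$ is disjoint from $Z(u)$, so $|u|$ is bounded below by a positive constant there, and $\overline{B(0,r_0)}\setminus W_u$ is disjoint from $S(u)$, so $|u|+|\nabla u|$ is bounded below by a positive constant there; taking $\delta_1(u)$ to be the smaller constant gives $\{x\in B(0,r_0):|u(x)|<\delta_1(u)\}\subset V_u$ and $\{x\in B(0,r_0):|u(x)|+|\nabla u(x)|<\delta_1(u)\}\subset W_u$, which are exactly \eqref{c-1} and \eqref{c-1.5} for $u$, together with \eqref{c-2}.

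Finally I upgrade $\delta_1(u)$ to a constant independent of $u$. The key observation is stability under $C^1$ perturbation: if the conclusion holds for $u$ with threshold $\delta$ and ball families $\mathcal B$, and $\|u-v\|_{C^1(\overline{B(0,r_0)})}<\delta/2$, then $\{|v|<\delta/2\}\subset\{|u|<\delta\}$ and $\{|v|+|\nabla v|<\delta/2\}\subset\{|u|+|\nabla u|<\delta\}$, so $\mathcal B$ also serves $v$ with threshold $\delta/2$, while the radii and the sums $\sum t_i^{d-1}$, $\sum s_j^{d-1}$ are unchanged. Thus each $u$ has a $C^1(\overline{B(0,r_0)})$-neighbourhood on which a fixed threshold and fixed families work; by compactness of $\mathcal{F}(N)$ finitely many such neighbourhoods suffice, and $\delta_1$ defined as the smallest of their thresholds, together with the $r_1$ fixed above, depends only on $d,\lambda,N$, as required. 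I expect the only real difficulty to be this last transfer from a positive lower bound depending on $u$ to a uniform one — which is precisely where compactness of $\mathcal{F}(N)$ (and the small care with the $L^2$-normalization when passing to limits) is needed; the covering step itself is routine given Theorem \ref{cd-thm} and \eqref{s-estimate}.
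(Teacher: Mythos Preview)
Your proof is correct and follows essentially the same approach as the paper's: cover $Z(u)$ and $S(u)$ at a small scale using the Hausdorff-measure bounds from Theorem~\ref{cd-thm} and \eqref{s-estimate}, take $\delta(u)$ to be the positive infimum of $|u|$ (resp.\ $|u|+|\nabla u|$) on the complement, then upgrade to a uniform $\delta_1$ via $C^1$-stability and compactness of $\mathcal{F}(N)$. Your handling of the compactness (relaxing the normalization to $\fint_{B_1}|u|^2\in[2^{-d}N^{-1},1]$ so that the class is genuinely closed) is in fact slightly more careful than the paper, which asserts compactness directly and relegates this point to the subsequent remark.
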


\begin{proof}
It follows from (\ref{s-estimate}) that  for $0<r<r_0/4$, there exists a finite sequence of balls
$\{B(y_j, s_j): j=1, 2, \dots, m^\prime \}$ with $y_j \in B(0, r_0)$ and $0<s_j<r$ such that
$$
S(u)\cap \overline{B(0, r_0)} \subset \bigcup_j B(y_j, s_j ) \quad 
\text{ and } \quad
\sum_{j=1}^{m^\prime} s_j^{d-1} < C(N) r.
$$
We now fix $r=r_1$, which depends only on $d$, $\lambda$ and $N$,
 so that the second inequality in (\ref{c-2}) holds.
Let 
$$
\delta(u) =\inf\Big\{ |u(x)| +|\nabla u (x)|: x\in B(0, r_0)
\setminus \bigcup_j B(y_j, s_j) \Big\} >0.
$$
Note that if $v\in \mathcal{F}(N)$ and
$$
\| v-u\|_{C^1(B_{3/4}) }< \rho(u),
$$
where $\rho(u)>0$ is sufficiently small, then
$$
\inf\Big\{ |v(x)| +|\nabla v (x)|: x\in B(0, r_0)
\setminus \bigcup_j B(y_j, s_j) \Big\} \ge \frac12 \delta (u).
$$
We now use the fact that $\mathcal{F}(N)$ is compact with respect to the topology 
induced by the norm in $C^1(B_{3/4})$.
This implies that there exists a finite sequence of functions $\{ u_k\}_{k=1}^\ell $ in $\mathcal{F}(N)$ such that
$\mathcal{F}(N)$ is covered by the union of sets
$$
\big\{ v\in \mathcal{F}(N): \| v-u_k \|_{C^1(B_{3/4})} < \rho(u_k) \big\}.
$$
Let
$$
\delta_1=\min \big\{  \delta(u_k)/2:\  k=1, 2, \dots, \ell \big\}.
$$
Thus we have proved that for any $u\in \mathcal{F}(N)$, there exists a finite sequence of balls 
$\big\{B(y_j, s_j): j=1, 2, \dots, m^\prime \big\}$ with $y_j \in B_{1/2}$ and
$s_j\in (0, r_1)$ satisfying the second inequality in (\ref{c-2}), such that
\begin{equation}\label{c-5}
\inf\Big\{ |u(x)| +|\nabla u (x)|: x\in B(0, r_0)
\setminus \bigcup_j B(y_j, s_j) \Big\} \ge \delta_1,
\end{equation}
where $\delta_1>0$ and $r_1>0$ depends only on $d$, $\lambda$ and $N$.
This gives (\ref{c-1.5}).

The proof of (\ref{c-1}) uses a similar compactness argument and the estimate
(\ref{cd-1}). We leave the details to the reader.
\end{proof}

\begin{remark}
{\rm 
Lemma \ref{c-lemma} continues to hold if we replace the condition 
$\fint_{B_1} |u|^2\, dx  =1$ in $\mathcal{F}(N)$ by $\fint_{B_1}|u|^2\, dx \le C$.
}
\end{remark}

\begin{lemma}\label{m-lemma}
Let $F_\e (y, r)$ be defined by (\ref{F}).
Then for $r_0=\sqrt{\lambda}/4$, there exists a finite sequence of balls $\{B(y_j, s_j): j=1, 2, \dots, m^\prime\}$ such that
$y_j \in B(0, r_0)$, $s_j \in (0, r_1)$, and
\begin{equation}\label{m-1}
F_\e (0, r_0)\le C(\wN) + \frac14  \max \big\{ F_\e (y_j, s_j): j=1,2, \dots, m^\prime\big\},
\end{equation}
where $C(\wN)$ depends only on $d$, $\lambda$, $M$ and $\wN$.
\end{lemma}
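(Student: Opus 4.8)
The plan is to marry the approximation result of Theorem~\ref{main-theorem-3} with the nodal and singular set information for the homogenized (constant--coefficient) solution supplied by Lemma~\ref{c-lemma}, and to control $Z(u_\e)$ at the scale $\e$ by Lemma~\ref{s-lemma}. The case $\e\ge\e_0$ requires no work: there $F_\e(0,r_0)\le C(\wN)$ directly from Theorem~\ref{cd-thm} after rescaling (as noted before Lemma~\ref{s-lemma}), so (\ref{m-1}) holds for \emph{any} admissible family of balls because its last term is nonnegative. Thus I assume $0<\e<\e_0$, where $\e_0=\e_0(d,\lambda,M,\wN)>0$ will be diminished finitely many times below.

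First I would apply Theorem~\ref{main-theorem-3} to $u_\e$; the standing doubling assumption (\ref{ud-100}) is a consequence, via Theorem~\ref{d-thm-1}, of the hypothesis (\ref{double-1}) of Theorem~\ref{main-theorem-1}, so (\ref{double-1}) is available and, together with the normalization (\ref{m-0}), produces $u_0$ with $\mathcal{L}_0(u_0)=0$ in $B_1$, $\|u_\e-u_0\|_{L^\infty(B_{3/4})}\le C\e$, $\|u_0\|_{C^1(B_1)}\le C$, and --- since $\e<\e_0$ --- the doubling inequality (\ref{double-0}). Since $\widehat{A}$ is a constant matrix lying in $\mathcal{A}(\lambda,M)$, interior estimates give $\|u_0\|_{C^{1,1}(B_{1/2})}\le C$, while the lower bound $\|u_0\|_{L^2(B_{1/2})}\ge c(\wN)>0$ follows from the inequality already extracted in the proof of Theorem~\ref{main-theorem-3} together with $\fint_{B_{3/2}}|u_\e|^2\ge c>0$ (a consequence of (\ref{m-0}) and (\ref{double-1})); in particular $u_0\neq0$, so by Theorem~\ref{cd-thm} $\mathcal{H}^{d-1}\bigl(Z(u_0)\cap B_{1/2}\bigr)\le C(\wN)$. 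Applying Lemma~\ref{c-lemma} (in the form of the remark following it, applied to $u_0$) produces $r_1,\delta_1>0$ depending only on $d,\lambda,\wN$ and balls $\{B(y_j,s_j)\}_{j=1}^{m'}$ with $y_j\in B(0,r_0)$, $s_j\in(0,r_1)$, $\sum_j s_j^{d-1}<\tfrac14 r_0^{d-1}$, and
$$
|u_0(x)|+|\nabla u_0(x)|\ge\delta_1\qquad\text{for all }x\in B(0,r_0)\setminus\textstyle\bigcup_j B(y_j,s_j).
$$
These are the balls named in the lemma.

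Next I would split the nodal set. After shrinking $\e_0$ so that $C\e_0<\delta_1/2$, any $x\in Z(u_\e)\cap B(0,r_0)$ satisfies $|u_0(x)|=|u_0(x)-u_\e(x)|<\delta_1/2$. Writing $G:=Z(u_\e)\cap B(0,r_0)\setminus\bigcup_j B(y_j,s_j)$, the part of $Z(u_\e)\cap B(0,r_0)$ inside $\bigcup_j B(y_j,s_j)$ has $\mathcal{H}^{d-1}$--measure at most $\sum_j F_\e(y_j,s_j)s_j^{d-1}\le\bigl(\max_j F_\e(y_j,s_j)\bigr)\sum_j s_j^{d-1}<\tfrac14 r_0^{d-1}\max_j F_\e(y_j,s_j)$. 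On $G$ one has $|u_0|<\delta_1/2$ and $|u_0|+|\nabla u_0|\ge\delta_1$, hence $|\nabla u_0|>\delta_1/2$; since $\|u_0\|_{C^{1,1}(B_{1/2})}\le C$, every point of $G$ lies within distance $w:=C\e/\delta_1$ of a point of the $C^{1,1}$ hypersurface $\Gamma:=\{y\in Z(u_0)\cap B_{1/2}:|\nabla u_0(y)|>\delta_1/4\}$, with $\mathcal{H}^{d-1}(\Gamma)\le C(\wN)$. A standard tube (upper Minkowski content) estimate covers the $w$--neighbourhood of $\Gamma$, hence $G$, by at most $C(\wN)\,w^{-(d-1)}=C(\wN)\,\e^{-(d-1)}$ balls $B(p_l,2w)$, all with $|p_l|<\sqrt{\lambda}/2$ once $\e$ is small. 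As $2w<k\e$ for $k:=\lceil 2C/\delta_1\rceil$ and $2w<3/8$, Lemma~\ref{s-lemma} gives $\mathcal{H}^{d-1}\bigl(Z(u_\e)\cap B(p_l,2w)\bigr)\le C(\wN,k)(2w)^{d-1}=C(\wN)\e^{d-1}$ for each $l$; summing, $\mathcal{H}^{d-1}(G)\le C(\wN)\e^{-(d-1)}\cdot C(\wN)\e^{d-1}=C(\wN)$. Adding the two contributions and dividing by $r_0^{d-1}=(\sqrt{\lambda}/4)^{d-1}$ yields (\ref{m-1}).

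The delicate point is the bound $\mathcal{H}^{d-1}(G)\le C(\wN)$: one must show $Z(u_\e)$ is trapped in an $O(\e)$--tube around the \emph{controlled} smooth hypersurface $\Gamma$ --- using only the $L^\infty$--approximation of Theorem~\ref{main-theorem-3}, the non--degeneracy $|\nabla u_0|>\delta_1/2$ off the singular set, and the $C^{1,1}$ bound on $u_0$ --- and then balance the $\sim\e^{-(d-1)}$ covering balls against the per--ball small--scale bound $\sim\e^{d-1}$ from Lemma~\ref{s-lemma}. The singular--set neighbourhood $\bigcup_j B(y_j,s_j)$, whose total $(d-1)$--content is below $\tfrac14 r_0^{d-1}$, is precisely what produces the contraction constant $\tfrac14$ in (\ref{m-1}).
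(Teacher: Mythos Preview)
Your proof is correct and follows the same architecture as the paper's: approximate $u_\e$ by the homogenized solution $u_0$ via Theorem~\ref{main-theorem-3}, split $Z(u_\e)\cap B(0,r_0)$ into the part falling in the singular cover $\bigcup_j B(y_j,s_j)$ (which produces the $\tfrac14\max_j F_\e(y_j,s_j)$ term) and the remaining ``good'' set $G$ where $|\nabla u_0|>\delta_1/2$, and bound $\mathcal{H}^{d-1}(G)$ by covering an $O(\e)$-tube around the regular part of $Z(u_0)$ with $\sim\e^{-(d-1)}$ balls of radius $\sim\e$ and invoking Lemma~\ref{s-lemma} on each.

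The one substantive difference is in how the tube covering of $G$ is organized. The paper also uses the \emph{first} ball sequence $\{B(x_i,t_i)\}$ from Lemma~\ref{c-lemma} (covering $\{|u_0|<\delta_1\}$ with $\sum_i t_i^{d-1}\le C(\wN)$), localizes the Implicit Function Theorem to each $E_i=B(x_i,t_i)\cap\{|u_0|\le C_0\e,\ |\nabla u_0|\ge\delta_1/2\}$, and covers each $E_i$ by at most $Ct_i^{d-1}\e^{-(d-1)}$ balls of radius $C\e$; summing over $i$ then gives $\mathcal{H}^{d-1}(G)\le C(\wN)$. You bypass this packaging and instead appeal directly to $\mathcal{H}^{d-1}(\Gamma)\le C(\wN)$ (via Theorem~\ref{cd-thm}) together with a global tube/Minkowski-content covering, using the uniform $C^{1,1}$ bound on $u_0$ and the lower bound $|\nabla u_0|>\delta_1/4$ on $\Gamma$ to control its geometry. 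The two routes are equivalent in substance; the paper's has the virtue that all geometric input about $u_0$ is pre-packaged into the compactness Lemma~\ref{c-lemma}, while yours is slightly more self-contained in that it only needs the second ball sequence from that lemma.
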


\begin{proof}
By Theorem \ref{main-theorem-3}  there exists $u_0\in H^1(B_1)$ such that
$\mathcal{L}_0 (u_0)=0$ in $B_1$ and
\begin{equation}\label{m-2}
\| u_\e -u_0\|_{L^\infty(B_{3/4})}
\le C_0\, \e,
\end{equation}
\begin{equation}\label{m-3}
\fint_{B_1} |u_0|^2\, dx \le C(\wN) \fint_{B_{1/2}} |u_0|^2\, dx,
\end{equation}
\begin{equation}\label{m-3.1}
\fint_{B_1} |u_0|^2\, dx \le C_0,
\end{equation}
where we have used the assumption (\ref{m-0}).
The constant $C_0$ in (\ref{m-2})-(\ref{m-3.1}) depends only on $d$, $\lambda$ and $M$.

We now apply Lemma \ref{c-lemma}  to $u_0$.
This gives us  two sequences of balls 
$$
\big\{B(x_i, t_i), i=1,2, \dots, m\big\} \quad \text{ and } \quad
 \big\{B(y_j, s_j), j=1,2, \dots, m^\prime\big\},
 $$
with $x_i, y_j \in B(0, r_0)$ and $t_i , s_j \in (0, r_1)$, such that (\ref{c-1}), (\ref{c-1.5}) and (\ref{c-2}),
with $u_0$ in the place of $u$, hold.
We may assume that $C_0\e<\delta_1/2$.
It follows that
\begin{equation}\label{m-6}
\aligned
Z(u_\e)\cap B(0, r_0)
& \subset Z(u_\e)\cap \big\{ x\in B(0, r_0): |u_0(x)|\le C_0 \, \e \big\}\\
 & \subset \bigg( \bigcup_{i=1}^m Z(u_\e)\cap E_i\bigg) 
  \bigcup \bigg(\bigcup_{j=1}^{m^\prime} Z(u_\e)\cap B(y_j, s_j)\bigg),
\endaligned
\end{equation}
where
\begin{equation}\label{m-6.5}
E_i =\big\{ x\in B(x_i, t_i): \ |u_0(x)|\le C_0 \e \text{ and } |\nabla u_0(x)|\ge \delta_1/2\big\}.
\end{equation}
Thus,
\begin{equation}\label{m-7}
\aligned
& \mathcal{H}^{d-1} \big( Z(u_\e)\cap B(0, r_0) \big)\\
 & \le \sum_{i=1}^m \mathcal{H}^{d-1} \big( Z(u_\e)\cap E_i \big)
+\sum_{j=1}^{m^\prime} \mathcal{H}^{d-1} \big( Z(u_\e)\cap B(y_j, s_j)\big)\\
& \le   \sup_i \frac{\mathcal{H}^{d-1}  \big( Z(u_\e)\cap E_i \big)}{t_i^{d-1}}\sum_{i=1}^m t_i^{d-1}
 +  \sup_j F_\e(y_j , s_j)\sum_{j=1}^{m^\prime} s_j^{d-1}\\
&\le C (\wN) \sup_i \frac{\mathcal{H}^{d-1}  \big( Z(u_\e)\cap E_i \big)}{t_i^{d-1}}
+ \frac 14 r_0^{d-1}\sup_j  F_\e (y_j, s_j),
\endaligned
\end{equation}
where we have used (\ref{c-2}) for the last step.

Finally, note that if $t_i \le C_0 \e$, we may use Lemma \ref{s-lemma} to obtain 
$$
\aligned
\mathcal{H}^{d-1} \big( Z (u_\e)\cap E_i \big)
& \le \mathcal{H}^{d-1}\big( Z(u_\e) \cap B(x_i, t_i)\big)\\
& \le C(\wN) t_i^{d-1}.
\endaligned
$$
Otherwise,
since $\|u_0\|_{C^2(B_{3/4})}\le C$ and $|u_0 (x)|\le C_0\e$, $|\nabla u_0(x)|\ge (1/2)\delta_1$ on $E_i$,
we may cover $E_i$ by a finite sequence of balls 
$\{ B(z_k, C\e), k=1, 2, \dots, m^{\prime\prime} \}$ such that $z_k \in B(x_i, t_i)$ and
$m^{\prime\prime} \e^{d} \le Ct_i^{d-1}\e $. 
This follows readily from the Implicit Function Theorem.
By Lemma \ref{s-lemma} we see that
$$
\aligned
\mathcal{H}^{d-1} (Z(u_\e)\cap E_i)
 & \le \sum_{k=1}^{m^{\prime \prime}}
  \mathcal{H}^{d-1} \big(Z(u_\e)\cap B(z_k, C \e)\big)\\
 & \le C(\wN) m^{\prime\prime} \e^{d-1} \le C(\wN)t_i^{d-1},
 \endaligned
$$
which, together with (\ref{m-7}), completes the proof.
\end{proof}

We are now in a position to give the proof of Theorem \ref{main-theorem-1}

\begin{proof}[\bf Proof of Theorem \ref{main-theorem-1}]

To prove (\ref{nd-1}), we fix $y_0\in B(0, 2r_0)$ and $\alpha\in (0,1)$ such that
$B(y_0, 2\alpha r_0)\subset B(0, 2r_0)$.
Consider the function
$$
v(x)=u_\e (y_0+\alpha x).
$$
Then
$$
\text{\rm div} \big(\widetilde{A} (x/(\e  \alpha^{-1}))\nabla v \big) =0,
$$
where $\widetilde{A}(x)=A(x+ y_0/\e)$. Since $\widetilde{A}\in \mathcal{A}(\lambda, M)$ and
$v$ satisfies the doubling condition (\ref{ud-100}) for $|y|<\sqrt{\lambda}/4$ and $0<r\le 1/2$,
it follows by Lemma \ref{m-lemma} that
$$
\frac{\mathcal{H}^{d-1} \big( Z(v)\cap B(0, r_0)\big)}{ r_0^{d-1}}
\le C(\wN) +\frac14
\sup_{j}
\frac{\mathcal{H}^{d-1} \big( Z(v)\cap B(y_j, s_j)\big)}{ s_j^{d-1}},
$$
where $y_j\in B(0, r_0)$ and $s_j\in (0, r_1)$ for $j=1, 2, \dots, m^\prime$.
By a change of variables this leads to
$$
\frac{\mathcal{H}^{d-1} \big( Z(u_\e )\cap B(y_0, \alpha r_0)\big)}{ (\alpha r_0)^{d-1}}
\le C(\wN) +\frac14
\sup_{j}
\frac{\mathcal{H}^{d-1} \big( Z(u_\e )\cap B(y_0 +\alpha y_j , \alpha s_j )\big)}{(\alpha s_j)^{d-1}}.
$$
Thus, for any $y_0\in B(0, 2r_0)$ and $\alpha\in (0,1)$ such that $B(y_0, 2\alpha r_0)\subset B(0,2r_0)$, we have
\begin{equation}\label{m-10}
F_\e (y_0, \alpha r_0 ) \le C(\wN)+\frac14 \sup_j
F_\e (y_0+\alpha y_j , \alpha s_j)
\end{equation}
for some $y_j \in B(0, r_0)$ and $s_j \in (0, r_1)$, $j=1, 2, \dots, m$.

Finally, we iterate the estimate (\ref{m-10}) and stop  the process for $F_\e (y, r)$ whenever $r<C_0\e$.
Using  the fact that $s_j< r_1< (1/4) r_0$,
  we may deduce  that
$$
\aligned
F_\e (0, r_0)
& \le C(\wN) \sum_{k=1}^\infty 4^{-k}
+ C  \sup_{\substack{ y\in B(0, 2r_0)\\ 0<r<C_0\e }}
F_\e (y, r)\\
&\le C (\wN),
\endaligned
$$
where we have used Lemma \ref{s-lemma} for the last step.
\end{proof}

 \bibliographystyle{amsplain}
 
\bibliography{Lin-Shen-2018.bbl}

\providecommand{\bysame}{\leavevmode\hbox to3em{\hrulefill}\thinspace}
\providecommand{\MR}{\relax\ifhmode\unskip\space\fi MR }
\providecommand{\MRhref}[2]{%
  \href{http://www.ams.org/mathscinet-getitem?mr=#1}{#2}
}
\providecommand{\href}[2]{#2}
\begin{thebibliography}{10}

\bibitem{D-Fefferman}
H.~Donnelly and C.~Fefferman, \emph{Nodal sets of eigenfunctions on
  {R}iemannian manifolds}, Invent. Math. \textbf{93} (1988), no.~1, 161--183.

\bibitem{G-Lin}
N.~Garofalo and F.~Lin, \emph{Monotonicity properties of variational integrals,
  {$A_p$} weights and unique continuation}, Indiana Univ. Math. J. \textbf{35}
  (1986), no.~2, 245--268.

\bibitem{Han-1994}
Q.~Han, \emph{Singular sets of solutions to elliptic equations}, Indiana Univ.
  Math. J. \textbf{43} (1994), no.~3, 983--1002.

\bibitem{Han-Lin-H-1998}
Q.~Han, R.~Hardt, and F.~Lin, \emph{Geometric measure of singular sets of
  elliptic equations}, Comm. Pure Appl. Math. \textbf{51} (1998), no.~11-12,
  1425--1443.

\bibitem{Han-Lin-1}
Q.~Han and F.~Lin, \emph{Nodal sets of solutions of parabolic equations. {II}},
  Comm. Pure Appl. Math. \textbf{47} (1994), no.~9, 1219--1238.

\bibitem{Han-Lin-2}
\bysame, \emph{On the geometric measure of nodal sets of solutions}, J. Partial
  Differential Equations \textbf{7} (1994), no.~2, 111--131.

\bibitem{Hardt-Simon}
R.~Hardt and L.~Simon, \emph{Nodal sets for solutions of elliptic equations},
  J. Differential Geom. \textbf{30} (1989), no.~2, 505--522.

\bibitem{JKO-1994}
V.~V. Jikov, S.~M. Kozlov, and O.~A. Oleinik, \emph{Homogenization of
  differential operators and integral functionals}, Springer-Verlag, Berlin,
  1994, Translated from the Russian by G. A. Yosifian [G. A. Iosifyan].

\bibitem{KLS2}
C.. Kenig, F.~Lin, and Z.~Shen, \emph{Convergence rates in {$L^2$} for elliptic
  homogenization problems}, Arch. Ration. Mech. Anal. \textbf{203} (2012),
  no.~3, 1009--1036.

\bibitem{KLS-2014}
C.~Kenig, F.~Lin, and Z.~Shen, \emph{Periodic homogenization of {G}reen and
  {N}eumann functions}, Comm. Pure Appl. Math. \textbf{67} (2014), no.~8,
  1219--1262.

\bibitem{KS-1}
C.~Kenig and Z.~Shen, \emph{Homogenization of elliptic boundary value problems
  in {L}ipschitz domains}, Math. Ann. \textbf{350} (2011), no.~4, 867--917.

\bibitem{Lin-N}
F.~Lin, \emph{Nodal sets of solutions of elliptic and parabolic equations},
  Comm. Pure Appl. Math. \textbf{44} (1991), no.~3, 287--308.

\bibitem{Logunov-2018-1}
A.~Logunov, \emph{Nodal sets of {L}aplace eigenfunctions: polynomial upper
  estimates of the {H}ausdorff measure}, Ann. of Math. (2) \textbf{187} (2018),
  no.~1, 221--239.

\bibitem{Logunov-2018-2}
\bysame, \emph{Nodal sets of {L}aplace eigenfunctions: proof of
  {N}adirashvili's conjecture and of the lower bound in {Y}au's conjecture},
  Ann. of Math. (2) \textbf{187} (2018), no.~1, 241--262.

\end{thebibliography}

\bigskip

\begin{flushleft}

Fanghua Lin, 
Courant Institute of Mathematical Sciences, 
251 Mercer Street, 
New York, NY 10012, USA.

Email: linf@cims.nyu.edu

\bigskip

Zhongwei Shen,
Department of Mathematics,
University of Kentucky,
Lexington, Kentucky 40506,
USA.

E-mail: zshen2@uky.edu
\end{flushleft}

\bigskip

\medskip

\end{document}